\newtheorem{theorem}{Theorem}[section]
\newtheorem{lemma}[theorem]{Lemma}
\newtheorem{corollary}[theorem]{Corollary}
\newtheorem{proposition}[theorem]{Proposition}
\theoremstyle{definition}
\newtheorem{definition}[theorem]{Definition}
\newtheorem{example}[theorem]{Example}
\theoremstyle{definition}
\newtheorem{remark}[theorem]{Remark}
\def\rk{{\rm rk}\,}
\def\S{{\mathcal S}}
\def\M{{\mathcal M}}
\def\R{{\mathcal R}}
\def\L{{\mathcal L}}
\def\D{{\mathcal D}}
\def\H{{\mathcal H}}
\def\J{{\mathcal J}}
\def\Im{{\rm Im}}
\begin{document}
\title{Linear functions preserving green's relations over fields}
\author[A. Guterman, M. Johnson, M. Kambites, A. Maksaev]{ALEXANDER GUTERMAN $^{1,2,3}$,  
	MARIANNE JOHNSON $^{4}$, \\
	MARK KAMBITES $^{4}$ and ARTEM MAKSAEV $^{1,3}$}
\thanks{\hspace{-0.5cm}The work of the first and the fourth authors is supported by RSF grant 17-11-01124 \\ \\ $^{1}$Faculty of Mathematics and Mechanics,
	Moscow State University,
	Moscow, GSP-1, 119991, Russia;\\$^{2}$
	Moscow Institute of Physics and Technology,
	Dolgoprudny, 141701, Russia\\ $^3$  Moscow Center for Fundamental and Applied Mathematics, Moscow, 119991, Russia \\ $^4$ Department of Mathematics, University of Manchester,
	Manchester M13 9PL, UK \\ \\
Email: \texttt{guterman@list.ru} (Alexander Guterman), \\
\texttt{Marianne.Johnson@manchester.ac.uk} (Marianne Johnson), \\
\texttt{Mark.Kambites@manchester.ac.uk} (Mark Kambites), \\
\texttt{artmak95@mail.ru} (Artem Maksaev).}
\maketitle

\begin{abstract}
We study linear functions on the space of $n \times n$ matrices over a field which preserve or strongly preserve each of Green's equivalence relations ($\L$, $\R$, $\H$ and $\J$) and the corresponding pre-orders. For each of these relations we are able to completely describe all preservers over an algebraically closed field (or more generally, a field in which every polynomial of degree $n$ has a root), and all strong preservers and bijective preservers over any field. Over a general field, the non-zero $\J$-preservers are all bijective and coincide with the bijective rank-$1$ preservers, while the non-zero $\H$-preservers turn out to be exactly the invertibility preservers, which are known.
The $\L$- and $\R$-preservers over a field with ``few roots'' seem harder to describe: we give a family of examples showing that they can be quite wild. 

\medskip
\noindent
{\bf Keywords:} Green relations, linear preservers

\medskip
\noindent
{\bf Mathematics Subject Classification (2020): 15A03, 15A15, 20M10}
\end{abstract}

\section{Introduction}

The investigation of linear transformations preserving natural functions, invariants and relations on matrices has a long history, dating back to a result of Frobenius \cite{Fr} describing maps which preserve the determinant. The maps preserving  minors of a certain order $r$ were characterised by Schur \cite{Sch}, while the singularity  preservers were described by Dieudonn\'{e}~\cite{Di2}. The characterisation of invertibility preservers goes back to the famous Kaplansky problem; see \cite{Semrl} for details. The results for complex matrices are due to Marcus and Purves \cite{MarcusPurves}, who proved that every linear unital invertibility preserving map on square complex matrics is either an inner automorphism or inner anti-automorphism. The same result does not hold if we replace complex matrices by real matrices. Indeed, the map $T$ defined by $T\left(\begin{smallmatrix} a & b \\ c & d \end{smallmatrix}\right)= \left(\begin{smallmatrix} a & b \\ -b & a \end{smallmatrix}\right)$ is linear, unital, and preserves invertibility, but is not an automorphism or anti-automorphism.  More recent work of de Seguins Pazzis \cite{dpazzis}
has given a characterisation of the invertibility preservers over an arbitrary field.
In general, understanding the linear maps which preserve a given relation helps to better understand the relation, and also to produce new sets of related elements from known ones. For further results and applications see the survey~\cite{P} and the monograph \cite{Molnar}.

\textit{Green's relations} are a number of equivalence relations and pre-orders which are defined upon any semigroup; they encapsulate the ideal structure of the semigroup, and play a central role in almost every aspect of semigroup theory. In particular, they are natural relations to define upon the set of $n \times n$ matrices over any field (or indeed, any semiring), when viewed as a semigroup under matrix multiplication. 
 It is therefore natural to consider maps which preserve Green's relations on matrices.
In \cite{GutermanJohnsonKambites}, motivated by recent interest in the structure of the \textit{tropical semifield}, three of the present authors classified  bijective linear maps
which preserve (or strongly preserve) each of Green's relations on the space of $n \times n$ matrices over an \textit{anti-negative semifield} (that is, any semifield which is not a field - see Proposition~\ref{dichotomy} below).

Much research on semifields attempts in some sense to extend or generalise existing (indeed, often classical) knowledge about fields. The results of \cite{GutermanJohnsonKambites} are rather unusual in this respect since they solved a very natural problem for all semifields \textit{except} fields, while the corresponding question for fields remained open. The purpose of the present paper is to address this problem for fields (and hence, in combination with \cite{GutermanJohnsonKambites}, for semifields in complete generality).

Working over a field, we completely classify the bijective $\L$-preservers (and hence $\L$-order preservers), and we show that strong preservers are automatically bijective so this also serves to classify the strong preservers. In the case that the field is algebraically closed (or more generally, has roots for every polynomial of degree the dimension of the matrices) we are able to classify all $\L$-preservers, but we give examples (in every even dimension over the real numbers, and in every dimension greater than $1$ over the rational numbers) suggesting that $\L$-preservers are hard to classify in general when the field does not have enough roots. Dual results apply to $\R$ and $\R$-order preservers.

For the $\H$ relation, we show that a non-zero linear map preserves $\H$ if and only if it preserves invertibililty; this combines with known results
\cite{dpazzis} to give a complete description over arbitrary fields. We are also able to completely describe the maps which preserve the $\J$-relation (which coincides with the $\D$-relation for semigroups of matrices over fields); it turns out that non-zero $\J$-preservers are all bijective.

Our paper is organised as follows.
In Section 2 we recall some necessary preliminary definitions, results and notions on the topics important for our paper, including semifields, Green's relations and linear preservers. Sections 3, 4, and 5 give results and associated examples relating respectively to the $\L,\R,\leq_\L$, and $\leq_\R$ relations, to the $\H$ and $\leq_\H$ relations, and to the $\J$ (equivalently, $\D$) and $\leq_\J$ relations.

\section{Preliminaries}

While this paper is primarily concerned with matrices over fields, we shall at times refer also to \textit{semifields}. By a \textit{semifield} we mean a set $\S$ equipped with two associative, commutative binary operations $+$ and $\times$ such that $\times$ distributes over $+$, there is an element $0$ which acts as an identity element for $+$ and a zero element for $\times$, and $\S \setminus \lbrace 0 \rbrace$ forms a group under $\times$ with identity element denoted $1$. Thus, a semifield is an algebraic structure which satisfies all of the standard field axioms except perhaps for the presence of additive inverses (that is, negatives). Notable examples, apart from fields, include the \textit{tropical semifield} ($\mathbb{R} \cup \lbrace -\infty \rbrace$ under operations ``$\max$" and $+$) and the \textit{boolean semifield} ($\lbrace \mathrm{TRUE}, \mathrm{FALSE} \rbrace$ under operations ``$\textrm{or}$'' and ``$\textrm{and}$''). We often write multiplication as juxtaposition. An \textit{$\S$-module} is a commutative monoid $M$ (written additively) with an action of $\S$ upon it (written $s \cdot x$ for $s \in \S$ and $x \in M$), satisfying $1\cdot x = x$, $s\cdot (x+y) = (s\cdot x) + (s \cdot y)$, $(s+t) \cdot x = (s \cdot x) + (t\cdot x)$ and $s\cdot (t\cdot x) = (s t)\cdot x$
for all $s, t \in \S$ and $x,y \in M$.

A semifield is termed
\textit{anti-negative} if no non-zero element has an additive inverse (in other words, $a+b=0$ implies $a=b=0$). The following dichotomy is well known but for completeness we include a brief proof.

\begin{proposition}\label{dichotomy}
	Let $\S$ be a semifield. Then either $S$ is a field or $S$ is anti-negative.
\end{proposition}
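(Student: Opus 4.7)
The plan is to prove the contrapositive: if $\S$ is not anti-negative, then $\S$ is a field. Since $\S$ already satisfies every field axiom except possibly the existence of additive inverses for all elements, it suffices to show that under the failure of anti-negativity, every element of $\S$ has an additive inverse.

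First I would unpack the hypothesis: if $\S$ is not anti-negative, there exist non-zero $a, b \in \S$ with $a + b = 0$. The key idea is to exploit multiplicative invertibility (which is built into the semifield axioms for non-zero elements) to ``transport'' this single instance of additive cancellation to every element. Since $a \neq 0$, I can multiply the equation $a+b = 0$ through by $a^{-1}$; using distributivity and the fact that $0$ is a multiplicative zero (axiomatic in a semifield, so $0 \cdot a^{-1} = 0$), this yields $1 + ba^{-1} = 0$. Thus the multiplicative identity $1$ has an additive inverse in $\S$, which I will denote $-1 := ba^{-1}$.

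Next, for an arbitrary $c \in \S$, I would compute $c + (-1)\cdot c = (1 + (-1))\cdot c = 0 \cdot c = 0$ using distributivity together with the zero axiom. Hence every $c \in \S$ admits an additive inverse $(-1)\cdot c$, and $\S$ satisfies all the field axioms, completing the proof.

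There is no real obstacle here; the only subtlety is remembering that ``$0$ is a multiplicative zero'' is part of the definition of a semifield given in the preliminaries (rather than something to be derived), which is what allows the step $0 \cdot c = 0$ without any cancellation argument in the additive monoid. Everything else is a two-line distributivity calculation.
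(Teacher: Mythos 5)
Your proof is correct and follows essentially the same route as the paper: both exploit the multiplicative invertibility of the non-zero element $a$ with $a+b=0$, together with distributivity and the absorbing property of $0$, to manufacture an additive inverse $a^{-1}bc$ for an arbitrary $c$. The only difference is cosmetic — you first isolate $-1 = ba^{-1}$ and then multiply by $c$, whereas the paper computes $c + a^{-1}bc = a^{-1}(a+b)c = 0$ in one step.
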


\begin{proof}
Suppose $\S$ is not anti-negative. Then it has a non-zero element $a$ with an additive inverse, say $b$ such that $a+b=0$. But now for any element $c$ we have $c + a^{-1} b c = a^{-1} ( a + b) c = a^{-1} 0 c = 0$ so $a^{-1} b c$ is an additive inverse for $c$. Thus, $\S$ is a field.
\end{proof}

For  a semifield $\S$ and a natural number $n$, we denote by $M_n(\S)$ the set of $n \times n$ matrices over $\S$, which we view
both as an $\S$-module (which   means just a $K$-vector-space if  $\S = K$ is a field) and a monoid under matrix multiplication. We write
$0_{n \times n}$ for the $n \times n$ zero matrix, which forms a zero element in $M_n(\S)$. We write $GL_n(\S)$ for the subgroup of
multiplicatively invertible matrices (which is the usual general linear group when $\S$ is a field, and the group of monomial matrices
when $\S$ is an anti-negative semifield). For $x_1, \dots, x_n \in \S$ we write ${\rm diag}(x_1, \dots, x_n)$ for the diagonal matrix with entry $x_i$ in the $i$th diagonal position and the zero element of the semifield elsewhere.
For $M \in M_n(\S)$ we write ${\rm Row}_\S(M)$
and ${\rm Col}_\S(M)$ for the row space and column space respectively of $M$ (that is, the $\S$-module generated by the rows or
columns of $M$ viewed as elements of the free $\S$-module $\S^n$). If $\S$ is a field we write $\rk(M)$ for the rank of
$M$ in the usual sense.

 If $\equiv$ is a binary relation on $M_n(\S)$, then a linear function $f : M_n(\S) \to M_n(\S)$ is called:
\begin{itemize}
\item a \textit{(weak) $\equiv$-preserver} if $(A \equiv B) \implies (f(A) \equiv f(B))$ for all \\ $A, B \in M_n(\S)$;
\item a \textit{strong $\equiv$-preserver} if $(A \equiv B) \iff (f(A) \equiv f(B))$ for all \\ $A, B \in M_n(\S)$.
\end{itemize}
We say also that a function $f$ \textit{(weakly) preserves} or \textit{strongly preserves} $\equiv$ with the obvious meaning. For another
binary relation $\cong$ on $M_n(\S)$   we say that $f$ \textit{exchanges} $\equiv$ with $\cong$ if for all $A, B \in M_n(\S)$ we have
$$(A \equiv B) \iff (f(A) \cong f(B))  \ \ \textrm{  and  } \ \ (A \cong B) \iff (f(A) \equiv f(B)).$$
For a set $X \subseteq M_n(\S)$, we say that $f$ \textit{(weakly) preserves} $X$ if $(A \in X) \implies (f(A) \in X)$.

The following relations, which are due to Green \cite{Green}, are of fundamental importance in the study of semigroups. (For general semigroups the
definitions are slightly more complex than those given below, but since all the semigroups we consider here are monoids we are able to use
a slightly simplified form of the definition.)
\begin{definition}
Let $\M$ be a monoid. For $a, b \in \M$, we say that:
\begin{itemize}
\item[(i)] $a \leq_\R b$ if and only if $a\M \subseteq b\M$, that is, if there exists $s \in \M$ with $a = bs$. We say that $a\R b$ if $a \leq_\R b$ and $b \leq_\R a$, or in other words if $a$ and $b$ generate the same principal right ideal of $\M$.

\item[(ii)] $a \leq_\L b$ if and only if $\M a \subseteq \M b$, that is, if there exists $s \in \M$ with $a = sb$. We say that $a\L b$ if $a \leq_\L b$ and $b \leq_\L a$, or in other words if $a$ and $b$ generate the same principal left ideal of $\M$.

\item[(iii)] $a \leq_\J b$ if and only if $\M a \M \subseteq \M b \M$, that is, if there exist $s, t \in \M$ with $a = sbt$. We say that $a\J b$ if $a \leq_\J b$ and $b \leq_\J a$, or in other words if $a$ and $b$ generate the same principal two-sided ideal of $\M$.

\item[(iv)] $a \leq_\H b$ if and only if $a \leq_\R b$ and $a \leq_\L b$. We say that $a\H b$ if $a\R b$ and $a \L b$ (or equivalently, if $a \leq_\H b$ and $b \leq_\H a$).
\end{itemize}
\end{definition}

We note that the relations $\L$, $\R$, $\J$ and $\H$ are equivalence relations, while $\leq_\L$, $\leq_\R$, $\leq_\J$ and $\leq_\H$ are pre-orders.
We are concerned with characterising those bijective linear maps on monoids $M_n(\S)$ which preserve or strongly preserve the relations defined above. For $\S$ an anti-negative semifield (which by Proposition~\ref{dichotomy} means any semifield which is not a field) this was accomplished in \cite{GutermanJohnsonKambites}; the present paper addresses the corresponding question for $\S$ a field, and hence, in combination with \cite{GutermanJohnsonKambites}, for semifields in complete generality.

When $\S = K$ is a field, it is well known (see for example \cite[Chapter 2, Exercise 19]{How}) that the relations $\J$ and $\leq_\J$ are completely determined by \textit{rank}: specifically $A \leq_\J B$ if and only if $\mathrm{rk}(A) \leq \mathrm{rk}(B)$, so $A \J B$ if and only if $\mathrm{rk}(A) = \mathrm{rk}(B)$. A further relation, $\D$, is also usually defined on a monoid $\M$ by  $a\D b$ if and only if there exists $c \in \M$ such that $a \R c$ and $c \L b$, and was considered in \cite{GutermanJohnsonKambites}. However, it is well known (see \cite[Chapter 2, Exercise 19]{How} again) that when $\S = K$ is a field, the relations $\D$ and $\J$ on $M_n(K)$ coincide, so we have no need to consider $\D$ here. The relation we have called $\leq_\H$ is less standard and less widely studied than the others, but since it is natural to define and it is handled by our arguments for $\H$, it makes sense to include it here.
For $A \in M_n(\S)$, we write $\H_A, \L_A, \R_A$ and $\J_A$ to denote the equivalence classes of $A$ with respect to the $\mathcal{H}$-, $\mathcal{L}$-, $\mathcal{R}$-, and $\mathcal{J}$-relations, respectively.

The following celebrated theorem will be one of the main tools in our further investigations. There are many different formulations and proofs going back to the work by L.~K.~Hua \cite[Theorem 2]{Hua}; see also the book  \cite[Chapter 3]{Wan} by Z.-X. Wan. We need the following formulation from \cite{Lautemann}.

\begin{theorem} \label{rank-1-preservers} \cite[Theorem 2]{Lautemann}
Let $K$ be a field and $T\colon M_n(K) \to M_n(K)$ be a bijective linear map that preserves the set of rank $1$ matrices. Then there exist $P, Q \in GL_n(K)$ such that either:
\begin{itemize}
\item $T(A) = PAQ$ for all $A \in M_n(K)$ or
\item $T(A) = PA^TQ$ for all $A \in M_n(K)$.
\end{itemize}
\end{theorem}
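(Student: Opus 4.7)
The plan is to exploit the geometric structure of the rank-$1$ matrices in $M_n(K)$. Every rank-$1$ matrix has the form $uv^T$ with $u, v \in K^n \setminus \{0\}$; for nonzero $u, v$ set $C_u = \{u w^T : w \in K^n\}$ and $R_v = \{w v^T : w \in K^n\}$. These are $n$-dimensional subspaces of $M_n(K)$ whose nonzero elements all have rank $1$. My first step is to prove that every subspace $V \subseteq M_n(K)$ of dimension at least $2$ consisting only of matrices of rank $\leq 1$ lies inside some $C_u$ or some $R_v$. If $A = u_1 v_1^T$ and $B = u_2 v_2^T$ are linearly independent elements of $V$, then $u_1 v_1^T + u_2 v_2^T$ has rank $2$ whenever $u_1, u_2$ and $v_1, v_2$ are each linearly independent pairs, so we must have $u_1 \parallel u_2$ or $v_1 \parallel v_2$; a short propagation argument extends this to any third element of $V$. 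Consequently, the maximal subspaces of $M_n(K)$ contained in $\{A : \rk(A) \leq 1\}$ are precisely the $C_u$ and the $R_v$, each of dimension $n$.

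Since $T$ is a bijective linear rank-$1$ preserver, it permutes the maximal rank-$1$ subspaces. To tell the two families apart I use the invariant $\dim(V + V')$: for distinct maximal rank-$1$ subspaces, this equals $2n$ when $V, V'$ are of the same type and $2n - 1$ when they are of opposite type, because $C_u \cap R_v = \mathrm{span}(uv^T)$ while $C_u \cap C_{u'} = 0$ for $u \not\parallel u'$. Linearity of $T$ forces it either to preserve the families $\{C_u\}$ and $\{R_v\}$ setwise or to swap them; in the latter case, composing $T$ with the transpose reduces to the former. Writing $T(C_u) = C_{\phi(u)}$ and $T(R_v) = R_{\psi(v)}$, a directness argument (the decomposition $M_n(K) = C_{e_1} \oplus \cdots \oplus C_{e_n}$ is preserved by the bijection $T$) shows that $\phi(e_1), \ldots, \phi(e_n)$ are linearly independent, and similarly for $\psi$. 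I then choose $P_0, Q_0 \in GL_n(K)$ with $P_0 e_i = \phi(e_i)$ and $Q_0^T e_j = \psi(e_j)$, so that the normalised map $T_0(A) := P_0^{-1} T(A) Q_0^{-1}$ is a bijective linear rank-$1$ preserver satisfying $T_0(C_{e_i}) = C_{e_i}$ and $T_0(R_{e_j}) = R_{e_j}$ for all $i, j$.

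With this normalisation in place, each matrix unit $E_{ij} = e_i e_j^T$ lies in $C_{e_i} \cap R_{e_j} = \mathrm{span}(E_{ij})$ and is mapped to a scalar multiple $c_{ij} E_{ij}$ with $c_{ij} \in K \setminus \{0\}$. Applying rank-$1$ preservation to the rank-$1$ matrix $(e_i + e_k)(e_j + e_l)^T$ for $i \neq k$, $j \neq l$ forces the $2 \times 2$ determinantal identity $c_{ij} c_{kl} = c_{il} c_{kj}$, from which one directly reads off $c_{ij} = \alpha_i \beta_j$ for suitable scalars $\alpha_i, \beta_j$. Hence $T_0(A) = \mathrm{diag}(\alpha_1, \ldots, \alpha_n) \, A \, \mathrm{diag}(\beta_1, \ldots, \beta_n)$ on matrix units, and by linearity on all of $M_n(K)$; absorbing these diagonal factors back into $P_0$ and $Q_0$ yields the desired $T(A) = PAQ$. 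I expect the main obstacle to be the bookkeeping required for the normalisation step in the second paragraph. A subtle point worth noting is that, a priori, the fundamental theorem of projective geometry would leave room for $\phi$ and $\psi$ to carry a field-automorphism (semilinear) twist; the matrix-unit reduction above sidesteps this issue cleanly, since the multiplicative identity $c_{ij} c_{kl} = c_{il} c_{kj}$ is valid over any field and uses only the $K$-linearity of $T$.
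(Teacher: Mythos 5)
The paper does not actually prove this statement: it is imported verbatim as \cite[Theorem 2]{Lautemann} and used as a black box (for instance in the proof of Theorem~\ref{L_maps_bijective_case}), so there is no in-paper argument to compare against. Your proposal is a correct, essentially self-contained proof along the classical Hua/Marcus--Moyls/Lautemann lines, and its three pillars all check out. (1) The classification of maximal linear subspaces of $\{A : \rk A \le 1\}$ as the spaces $C_u$ and $R_v$: your propagation step works because if a third element $u_3v_3^T$ had $u_3\not\parallel u$, compatibility with both $uv_1^T$ and $uv_2^T$ would force $v_3$ parallel to two independent vectors $v_1,v_2$. (2) The invariant $\dim(V+V')$ (equal to $2n$ for distinct subspaces of the same type and $2n-1$ for opposite types) is preserved because $T$ is injective and linear, so the two families are preserved or swapped, and composing with the transpose disposes of the swapped case. (3) The matrix-unit identity $c_{ij}c_{kl}=c_{il}c_{kj}$ does rigidify the scalars to $c_{ij}=\alpha_i\beta_j$, and, as you note, this genuinely sidesteps the semilinear twist one would fear from the fundamental theorem of projective geometry, since you only ever evaluate $\phi$ and $\psi$ on the standard basis and then verify $T_0(A)=\mathrm{diag}(\alpha)\,A\,\mathrm{diag}(\beta)$ on the basis $\{E_{ij}\}$ of $M_n(K)$. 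Two small points to make explicit in a final write-up: treat $n=1$ separately (there $C_u$ and $R_v$ coincide and the statement is trivial), and justify that $T$ maps each maximal rank-$\le 1$ subspace \emph{onto} a maximal one --- its image is an $n$-dimensional subspace of rank-$\le 1$ matrices, hence contained in, and therefore by dimension equal to, some $C_a$ or $R_b$; the word ``permutes'' presumes surjectivity on the set of such subspaces, which you neither establish nor need.
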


The following statement is standard for linear preservers.   
\begin{lemma} \label{transformations}
Let K be a field and $P, Q \in GL_n(K)$. Then
\begin{itemize}
\item[(i)] the bijective linear transformation $T:M_n(K) \rightarrow M_n(K)$ defined by $T(A) = PAQ$ for all $A \in M_n(K)$ preserves each of Green's relations $\mathcal{L}, \mathcal{R}, \mathcal{H}, \mathcal{J}$ and the orders $\leq_\mathcal{L}$, $\leq_\mathcal{R}$, $\leq_\mathcal{H}$, and $\leq_\mathcal{J}$; and
\item[(ii)] the bijective linear transformation $T:M_n(K) \rightarrow M_n(K)$ defined by $T(A) = PA^TQ$ for all $A \in M_n(K)$ preserves the relations $\mathcal{H}, \mathcal{J}$ and the orders $\leq_\mathcal{H}$, and $\leq_\mathcal{J}$. It exchanges $\mathcal{L}$ with $\mathcal{R}$
and exchanges $\leq_\mathcal{L}$ with $\leq_\mathcal{R}$.
\end{itemize}
\end{lemma}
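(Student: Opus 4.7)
The claim is essentially a direct computation, so my plan is to verify each assertion by writing down explicit witnesses for the appropriate ideal containments, using only the fact that $P$ and $Q$ are invertible together with the elementary identity $(XY)^T = Y^T X^T$.

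For part (i), I will start with the $\leq_\R$ case. Suppose $A \leq_\R B$, so that $A = BS$ for some $S \in M_n(K)$. Then
\[
PAQ \;=\; P(BS)Q \;=\; (PBQ)(Q^{-1}SQ),
\]
so $PAQ \leq_\R PBQ$. The symmetric computation $A = SB \Rightarrow PAQ = (PSP^{-1})(PBQ)$ handles $\leq_\L$. The cases $\leq_\J$ (use $A = SBT \Rightarrow PAQ = (PSP^{-1})(PBQ)(Q^{-1}TQ)$) and $\leq_\H$ follow the same pattern, and the four equivalence relations follow immediately from the corresponding preorders. Since no assumption about $P$ or $Q$ beyond their lying in $GL_n(K)$ is used, each step is a one-line verification.

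For part (ii), the key observation is that transposition swaps the sides of a factorisation. If $A = BS$ (so $A \leq_\R B$), then
\[
PA^T Q \;=\; P(BS)^T Q \;=\; PS^T B^T Q \;=\; (PS^T P^{-1})(PB^T Q),
\]
showing $PA^T Q \leq_\L PB^T Q$; dually, $A = SB$ yields $PA^T Q = (PB^T Q)(Q^{-1}S^T Q)$, so $PA^T Q \leq_\R PB^T Q$. For $\leq_\J$, factor $A = SBT$ as $A^T = T^T B^T S^T$ and conjugate by $P$ and $Q$ as before; this preserves $\leq_\J$. The statements for the equivalence relations $\H$, $\J$, $\L \leftrightarrow \R$ then fall out by applying the preorder statements in both directions and noting that the inverse of $T(A) = PA^T Q$ has the same form (it is $A \mapsto (P^{-1})^T A^T (Q^{-1})^T$, which again fits the template of part (ii)); this bijectivity is what turns the one-way implications for the preorders into the \emph{exchange} conclusion for $\L$ and $\R$.

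There is no real obstacle in this lemma; the only thing to be careful about is making sure that each computed witness truly has the shape required by the definition of the relevant Green relation (i.e.\ that the ``multiplier'' lies in $M_n(K)$, which is automatic since $M_n(K)$ is closed under multiplication and inversion of invertible elements). I would present the proof compactly, doing the $\leq_\R$ computations in full and remarking that the remaining cases follow by the same trick of conjugating one side of the factorisation by $P$ or $Q$.
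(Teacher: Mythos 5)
Your proof is correct and follows essentially the same route as the paper: both verify the preorder containments by conjugating the explicit witness by $P$ or $Q$ (e.g.\ $A=XB$ gives $PAQ=(PXP^{-1})(PBQ)$), the only cosmetic difference being that the paper handles part (ii) by factoring the map through the transpose and a map of the form in part (i) rather than recomputing. One tiny slip: the inverse of $A \mapsto PA^{T}Q$ is $B \mapsto (Q^{-1})^{T}B^{T}(P^{-1})^{T}$, not $(P^{-1})^{T}B^{T}(Q^{-1})^{T}$ --- the roles of $P$ and $Q$ swap under inversion --- but this is immaterial since all you use is that the inverse again has the form treated in part (ii).
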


\begin{proof}
(i) It suffices to show that each of the orders is preserved, from which it will follow that the corresponding relation must be preserved.
If $A \leq_{\mathcal{L}} B$, then $A=XB$ for some $X \in M_n(K)$, and so
$$T(A) = PAQ = PXBQ = (PXP^{-1})PBQ = (PXP^{-1})T(B).$$
Similarly, for $\leq_{\mathcal{R}}$, $\leq_{\mathcal{J}}$ and $\leq_{\mathcal{H}}$.

(ii) The transpose map $ A \to A^T$ on $M_n(K) $  is easily seen to exchange $\L$ with $\R$ and $\leq_\L$ with $\leq_R$, from which it follows
that it preserves $\H$ and $\leq_\H$. This map also preserves $\J$ and $\leq_\J$ since it preserves rank.
 The claim now follows from the fact that each map of the given form is the composition
of the transpose map with a map of the form in part (i).
\end{proof}

We shall see below that the transformations described by Lemma~\ref{transformations} are the only \textit{bijective} linear transformations which preserve any of Green's relations. It is more complex to describe the non-bijective linear transformations preserving  the various relations in complete generality. We provide characterisations in the case where the field is algebraically closed.

\section{The $\L$ and $\R$ relations}

We recall a characterisation of Green's $\L$ and $\R$ relations for matrix semigroups. Further we consider the elements of $K^n$ as row vectors $v=(v_1, \ldots, v_n)$.
The following proposition is well known and can be found in the literature for example as \cite[Lemma 2.1]{Okninsk} in the field case or \cite[Proposition 4.1]{HK} in greater generality.

\begin{proposition} \label{Okn}
Let $\S$ be a semifield. For $A, B \in M_n(\S)$, we have:
\begin{itemize}
\item[(i)] $A \L B$ if and only if ${\rm Row}_\S(A) = {\rm Row}_\S(B)$;
\item[(ii)] $A \R B$ if and only if  ${\rm Col}_\S(A) = {\rm Col}_\S(B)$;
\item[(iii)] $A \leq_\L B$ if and only if ${\rm Row}_\S(A) \subseteq {\rm Row}_\S(B)$;
\item[(iv)] $A \leq_\R B$ if and only if ${\rm Col}_\S(A) \subseteq {\rm Col}_\S(B)$.
\end{itemize}
In particular, if $\S = K$ is a field then the set of invertible matrices in $M_n(K)$ forms a single $\L$-class and a single $\R$-class, and hence
also a single $\H$-class (because $\H = \L \cap \R$) and a single $\J$-class (because $\J = \D$ when $\S$ is a field, and $\D$ is the transitive closure of $\L \cup \R$).
\end{proposition}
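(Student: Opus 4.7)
The plan is to establish the order versions (iii) and (iv) first, since the equivalence versions (i) and (ii) then follow immediately by applying the order characterisations in both directions, and the final ``in particular'' statement follows by an easy argument specific to the field case.

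For (iii), I would translate the definition of $\leq_\L$ directly. If $A \leq_\L B$, then $A = XB$ for some $X \in M_n(\S)$, and by expanding matrix multiplication the $i$th row of $A$ equals $\sum_k X_{ik} \cdot (\text{row } k \text{ of } B)$, which is an $\S$-linear combination of the rows of $B$; hence ${\rm Row}_\S(A) \subseteq {\rm Row}_\S(B)$. Conversely, assuming the row space containment, each row $a_i$ of $A$ can be written as $a_i = \sum_k X_{ik} \cdot (\text{row } k \text{ of } B)$ for some coefficients $X_{ik} \in \S$; collecting these coefficients into a matrix $X = (X_{ik}) \in M_n(\S)$ gives $A = XB$, so $A \leq_\L B$. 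Part (iv) is entirely analogous, multiplying on the right and working with columns in place of rows; note that we never subtract, so the argument is valid over an arbitrary semifield.

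Parts (i) and (ii) then follow immediately: $A \L B$ means $A \leq_\L B$ and $B \leq_\L A$, which by (iii) is equivalent to the two row-space containments, i.e.\ to row-space equality. Similarly for $\R$ using (iv).

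For the final assertion, assume $\S = K$ is a field and let $A, B \in GL_n(K)$. Then $A = (AB^{-1}) B$ shows $A \leq_\L B$, and swapping the roles of $A$ and $B$ gives $B \leq_\L A$, hence $A \L B$. The identical argument (or the factorisation $A = B(B^{-1}A)$) gives $A \R B$, so $A \H B$ since $\H = \L \cap \R$. Finally $\L \subseteq \J$, so all invertible matrices lie in a single $\J$-class as well (alternatively one invokes the already-recalled fact that $\J$ coincides with equality of rank over a field, and invertibles all have full rank). The only mildly delicate point is reassuring oneself that the ``linear combination from the rows'' argument works without additive inverses, but since matrix multiplication only involves sums of products it goes through verbatim over any semifield, so there is no real obstacle.
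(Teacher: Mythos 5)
The paper itself gives no proof of this proposition: it is stated as well known, with references to \cite[Lemma 2.1]{Okninsk} for the field case and \cite[Proposition 4.1]{HK} for general semifields (and the ``in particular'' clause is treated as an immediate consequence). Your proposal supplies the standard direct argument, and it is correct: the identification of the $i$th row of $XB$ with $\sum_k X_{ik}\,(\text{row } k \text{ of } B)$ uses only sums of products, and the submodule of $\S^n$ generated by the $n$ rows of $B$ is exactly the set of such combinations (zero coefficients being available since $0\in\S$), so both directions of (iii) go through over an arbitrary semifield exactly as you say; (iv), (i), (ii) follow as you describe. The one point worth tightening is the ``single $\L$-class'' claim: your factorisation $A=(AB^{-1})B$ shows only that $GL_n(K)$ is \emph{contained in} a single $\L$-class, whereas the statement asserts it \emph{is} one. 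You should add the converse half, which is immediate from (i): if $A\L B$ with $B$ invertible then ${\rm Row}_K(A)={\rm Row}_K(B)=K^n$, so $A$ has rank $n$ and is itself invertible; dually for $\R$, and then for $\H$ and $\J$. With that one line added, your argument is a complete and self-contained substitute for the citations the paper relies on.
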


\subsection{Bijective linear  $\L$ and $\R$ relation preservers}
\begin{lemma} \label{L-properties-rank>1}
Let $K$ be a field, and suppose $V \subseteq M_n(K)$ is both a non-trivial linear subspace and a union of $\L$-classes (or $\R$-classes). Then $V$ contains a rank $1$ matrix.
\end{lemma}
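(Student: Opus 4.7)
My plan is to pick any non-zero $A \in V$ (possible since $V$ is non-trivial) and show that if $r := \mathrm{rk}(A) \geq 2$ then $V$ also contains a matrix of rank $1$; the case $r = 1$ gives the conclusion trivially.

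The construction rests entirely on Proposition~\ref{Okn}(i), which identifies the $\L$-class of $A$ with the set of all matrices having the same row space as $A$. Since $r \geq 2$, there exist two distinct row indices $i \neq j$ such that the rows $a_i, a_j$ of $A$ are linearly independent; in particular $a_i \neq 0$. Let $B$ be the matrix obtained from $A$ by replacing row $j$ with $a_i + a_j$ and leaving every other row unchanged. A quick inclusion check in both directions shows ${\rm Row}_K(B) = {\rm Row}_K(A)$, so $B \L A$, and hence $B \in V$ because $V$ is a union of $\L$-classes. Since $V$ is a linear subspace, $A - B \in V$; but $A-B$ has $-a_i$ in row $j$ and zeros elsewhere, so it is a rank $1$ matrix in $V$, as required.

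For the $\R$-case, the argument is entirely analogous with columns replacing rows, using Proposition~\ref{Okn}(ii); alternatively, one may pass to the transposed subspace $V^T$, which is then a union of $\L$-classes containing a rank $1$ matrix, and transpose back.

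There is no genuine obstacle here: the whole proof is a single explicit construction, and it works uniformly over any field $K$ (no assumption on $|K|$ or $\mathrm{char}(K)$ is needed). The one point worth flagging is the deliberate choice to alter only a single row, which forces $A - B$ to be of rank exactly $1$ and turns what might look like an inductive rank-reduction into a one-step argument.
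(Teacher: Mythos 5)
Your proof is correct and follows essentially the same approach as the paper: both exploit the fact that $V$ is simultaneously a subspace and a union of $\L$-classes by applying a row-space-preserving operation to $A$ and subtracting. The only (cosmetic) difference is the choice of operation --- the paper swaps rows $i$ and $j$ via a permutation matrix, yielding a difference whose $i$th and $j$th rows are $\pm(a_i-a_j)$, whereas you add row $i$ to row $j$, yielding a difference with the single non-zero row $-a_i$; both visibly have rank $1$.
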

\begin{proof}
We treat the case where $V$ is a union of $\L$-classes, the case where it is a union of $\R$-classes being dual.
Since $V$ is non-trivial, we may choose some non-zero $A \in V$. If $A$ has rank $1$ then we are done. Otherwise, $A$ 
has two linearly independent rows, say, rows $i$ and $j$. Let $P$ denote the permutation matrix swapping $i$ and $j$. Then $(PA) \L A$ so $PA \in V$
(since $V$ is a union of $\L$-classes), and hence $A-PA \in V$ (since $V$ is a linear subspace).  But it is easy to see that $\rk (A-PA)=1$ since its $i$th and $j$th rows differ only in sign whilst all other rows are zero.\end{proof}

\begin{lemma} \label{LR-properties-rank-1}
Let $K$ be a field and $A \in M_n(K)$. The following are equivalent:
\begin{itemize}
\item[(i)] $A$ has rank 1;
\item[(ii)] $\L_A \cup \{0_{n \times n}\}$ is a linear subspace of $M_n(K)$ of dimension $n$;
\item[(iii)] $\R_A \cup \{0_{n \times n}\}$ is a linear subspace of $M_n(K)$ of dimension $n$.
\end{itemize}
\end{lemma}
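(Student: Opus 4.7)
The plan is to prove the equivalence by first establishing (i) $\Leftrightarrow$ (ii), and then noting that (i) $\Leftrightarrow$ (iii) follows by a verbatim dual argument using columns in place of rows (equivalently, by transposing and invoking the fact that transposition exchanges $\L$ with $\R$). So the real content lies in (i) $\Leftrightarrow$ (ii).

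For (i) $\Rightarrow$ (ii), I would use Proposition~\ref{Okn}(i) to rewrite $\L_A$ as the set of matrices with the same row space as $A$. If $\rk(A)=1$, then ${\rm Row}_K(A)$ is spanned by a single non-zero vector $v \in K^n$, so $\L_A \cup \{0_{n \times n}\}$ is precisely the set of matrices whose every row is a scalar multiple of $v$ (including the zero matrix, which arises when all the scalars are $0$). This set is clearly closed under addition and scalar multiplication, hence is a linear subspace, and the linear bijection $(c_1,\dots,c_n) \mapsto (c_i v)_{i=1}^{n}$ from $K^n$ onto this subspace (which is injective because $v \ne 0$) shows its dimension is exactly $n$.

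For the harder direction (ii) $\Rightarrow$ (i), I would invoke Lemma~\ref{L-properties-rank>1}. Assuming $\L_A \cup \{0_{n \times n}\}$ is a linear subspace of dimension $n \geq 1$, it is non-trivial, and it is a union of $\L$-classes (namely $\L_A$ together with the $\L$-class of the zero matrix). Lemma~\ref{L-properties-rank>1} then supplies some rank-$1$ matrix $B$ inside this subspace. Since $B \ne 0$ it must lie in $\L_A$ itself, so by Proposition~\ref{Okn}(i) we have ${\rm Row}_K(A) = {\rm Row}_K(B)$, giving $\rk(A) = \rk(B) = 1$.

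The main obstacle, such as it is, has already been absorbed into Lemma~\ref{L-properties-rank>1}; once that is available the remaining work is essentially just bookkeeping about row spaces and dimensions. I note in passing that the $n \geq 1$ dimension hypothesis in (ii) and (iii) is needed only to exclude the degenerate case $A = 0_{n \times n}$ (where $\L_A \cup \{0_{n \times n}\} = \{0_{n \times n}\}$ has dimension $0$); the exact value $n$ is guaranteed by the forward direction but not needed for the converse beyond non-triviality.
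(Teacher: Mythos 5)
Your proposal is correct and follows essentially the same route as the paper: the forward direction is the same explicit description of $\L_A \cup \{0_{n\times n}\}$ as $\{u^Tv : u \in K^n\}$, and the converse rests on Lemma~\ref{L-properties-rank>1} exactly as in the paper, the only cosmetic difference being that you argue directly (extracting a rank-$1$ matrix from the assumed subspace and noting it must lie in $\L_A$) where the paper argues by contrapositive on the cases $\rk(A)=0$ and $\rk(A)\geq 2$.
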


\begin{proof}
To see that (ii) implies (i), we prove the contrapositive. It is clear that if $A$ has rank  $0$, then $\L_A \cup\{0_{n \times n}\} = \{0_{n \times n}\}$, which is not a subspace of dimension $n$. When $A$ has rank at least 2, $\L_A \cup \{0_{n \times n}\}$ is a union of $\L$-classes containing no rank $1$ matrices, so by Lemma \ref{L-properties-rank>1} it is not a subspace at all.

To see that (i) implies (ii), suppose that $A$ has rank $1$. Thus the row space of $A$ is $1$-dimensional. Recalling from Proposition~\ref{Okn} that two matrices are $\mathcal{L}$-related if and only if they have the same row space, we see that there exists $v \in K^n \smallsetminus \{0\}$ such that
$$\L_A \cup \{0_{n \times n}\} \ = \ \{ u^T v :u \in K^n\}.$$
This is clearly a linear subspace of dimension $n$. 

The equivalence of   (i) and (iii) is dual.
\end{proof}

We are now ready to establish our first main result. In fact, the statement exactly mirrors a result in the anti-negative semifield case from \cite{GutermanJohnsonKambites} and hence applies to semifields in complete generality, although the proof for fields is completely
different.
\begin{theorem} \label{L_maps_bijective_case}
Let $\S$ be a semifield and $T\colon M_n(\S) \rightarrow M_n(\S)$ a bijective linear map. The following are equivalent:
\begin{itemize}
\item[(i)] $T$ preserves $\mathcal{L}$;
\item[(ii)] $T$ preserves $\mathcal{R}$;
\item[(iii)] $T$ preserves $\leq_\mathcal{L}$;
\item[(iv)] $T$ preserves $\leq_\mathcal{R}$;
\item[(v)] there exist $P, Q \in  GL_n(\S)$ such that $T(A) = PAQ$ for all $A \in M_n(\S)$.
\end{itemize}
\end{theorem}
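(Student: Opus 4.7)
The plan is to close the cycle of implications as follows. The easy directions (v)$\Rightarrow$(i)--(iv) follow from Lemma~\ref{transformations}(i), and the trivial implications (iii)$\Rightarrow$(i), (iv)$\Rightarrow$(ii) hold because each equivalence relation is the symmetrisation of the corresponding pre-order. For (i)$\Rightarrow$(iii) (and symmetrically (ii)$\Rightarrow$(iv)), I would exploit a span characterisation of the pre-order: by Proposition~\ref{Okn}, $A \leq_\L B$ holds precisely when $\mathrm{Row}_K(A) \subseteq \mathrm{Row}_K(B)$, and a direct argument shows this is equivalent to $A \in \mathrm{span}(\L_B)$, since every matrix with rows in $\mathrm{Row}_K(B)$ can be written as a difference of two matrices of full row space $\mathrm{Row}_K(B)$. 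Given a linear $T$ preserving $\L$, $T(\L_B) \subseteq \L_{T(B)}$ and by linearity $T(\mathrm{span}(\L_B)) \subseteq \mathrm{span}(\L_{T(B)})$, so $A \leq_\L B$ forces $T(A) \leq_\L T(B)$.

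For (iii)$\Rightarrow$(v) my plan is to show that $T$ preserves the set of rank-$1$ matrices, then invoke Theorem~\ref{rank-1-preservers} to conclude $T(A) = PAQ$ or $T(A) = PA^TQ$. The transpose form is ruled out for $n \geq 2$ because, by Lemma~\ref{transformations}(ii), it exchanges $\L$ with $\R$, whereas $\L \neq \R$ on $M_n(K)$ (for example, $E_{11} \L (E_{11}+E_{21})$ but these matrices are not $\R$-related); the case $n=1$ is trivial. To establish rank-$1$ preservation, I would study the induced map $\tau$ on the lattice of subspaces of $K^n$ defined by $T(\L_U) \subseteq \L_{\tau(U)}$. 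Hypothesis (iii) promotes this to $T(S_U) \subseteq S_{\tau(U)}$, where $S_U := \mathrm{span}(\L_U)$; linearity of $T$ together with $S_U + S_V = S_{U+V}$ yields additivity $\tau(U+V)=\tau(U)+\tau(V)$, and monotonicity gives $\dim \tau(U) \geq \dim U$. Bijectivity of $T$ then forces, for each $W \subseteq K^n$, the auxiliary subspace $U_W^* := \{v \in K^n : \tau(\langle v\rangle) \subseteq W\}$ to satisfy $T^{-1}(S_W) = S_{U_W^*}$, whence $\dim U_W^* = \dim W$. Combining these properties pushes $\tau$ to a dimension-preserving bijection of the subspace lattice; in particular $1$-dimensional subspaces are sent to $1$-dimensional subspaces, and by Lemma~\ref{LR-properties-rank-1} the rank-$1$ matrices are preserved.

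The main obstacle is precisely the dimension preservation of $\tau$ over arbitrary fields. A purely local argument inside $\L_{T(A)} \cup \{0_{n \times n}\}$, based only on Lemmas~\ref{L-properties-rank>1} and \ref{LR-properties-rank-1}, does not suffice, because over non-algebraically-closed fields the $\L$-class of a matrix $C$ with $\mathrm{rk}(C) \geq 2$ may contain $n$-dimensional subspaces: for instance, over $\mathbb{R}$ with $n = 2$, the subspace $\mathrm{span}(I, J)$ lies inside the $\L$-class of invertibles. Therefore the bijectivity of $T$ and the global additive and order-theoretic structure of $\tau$ must be exploited essentially; the local dimension comparison between $T(\L_A \cup \{0_{n \times n}\})$ and $\L_{T(A)} \cup \{0_{n \times n}\}$ alone is not enough.
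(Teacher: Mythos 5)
Your overall architecture is sound and, at its core, close to the paper's: both routes reduce everything to showing that a bijective linear $\L$-preserver (or its inverse) preserves the set of rank-$1$ matrices, then invoke Theorem~\ref{rank-1-preservers} and exclude the transpose form because it exchanges $\L$ with $\R$. The genuine problem is the sentence ``combining these properties pushes $\tau$ to a dimension-preserving bijection of the subspace lattice''. The properties you actually derive --- $\tau$ monotone and additive, $\dim\tau(U)\ge\dim U$, and $\dim U_W^*=\dim W$ --- do \emph{not} imply that $\tau$ preserves dimension. At the level of the subspace lattice of $K^2$ over an infinite field, one can define a map $\tau$ sending one distinguished line to all of $K^2$ and permuting the remaining lines bijectively onto the set of \emph{all} lines; this is monotone, additive, dimension-non-decreasing, and satisfies $\dim U_W^*=\dim W$ for every $W$, yet it inflates the dimension of a line. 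So these facts alone cannot yield that $1$-dimensional subspaces go to $1$-dimensional subspaces, i.e.\ that $T$ itself preserves rank $1$.

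What your computation \emph{does} establish --- and it suffices --- is the statement for $T^{-1}$: taking $\dim W=1$, the identity $T^{-1}(S_W)=S_{U_W^*}$ with $\dim U_W^*=1$ says the $T$-preimage of each rank-$1$ $\L$-class (plus $0_{n\times n}$) is again a rank-$1$ $\L$-class (plus $0_{n\times n}$), so $T^{-1}$ preserves rank $1$. Apply Theorem~\ref{rank-1-preservers} to the bijection $T^{-1}$ rather than to $T$; then $T$ inherits one of the two standard forms and the transpose form is ruled out as you indicate. This is exactly the paper's move: it shows that $T^{-1}(\L_B\cup\{0_{n\times n}\})$ is an $n$-dimensional subspace which is a \emph{union} of $\L$-classes, hence by Lemma~\ref{L-properties-rank>1} contains a rank-$1$ matrix $X$, and equals $\L_X\cup\{0_{n\times n}\}$ by dimension count. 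Note that Lemma~\ref{L-properties-rank>1} applies only to unions of $\L$-classes, which is why your (correct) observation about ${\rm span}(I,J)\subseteq GL_2(\mathbb{R})\cup\{0_{n\times n}\}$ does not obstruct it, and why the paper gets by with hypothesis (i) alone rather than (iii). Two smaller points: your claim that $A\le_\L B$ iff $A\in{\rm span}(\L_B)$ is true but the ``difference of two matrices of full row space'' justification needs care in characteristic $2$ and over small fields (the device of Lemma~\ref{lem:rho}(ii), adding a row vector to a \emph{different} row, handles this); and the theorem is stated for arbitrary semifields, so the anti-negative case must be quoted from \cite{GutermanJohnsonKambites} via Proposition~\ref{dichotomy} before assuming $\S$ is a field.
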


\begin{proof}
The case where $\S$ is an anti-negative semifield is \cite[Theorem 3.5]{GutermanJohnsonKambites}, so by Proposition~\ref{dichotomy}
we may assume $\S$ is a field $K$.
It is clear from the definitions that (iii) implies (i), and (iv) implies (ii). Moreover, it follows immediately from Lemma \ref{transformations} that (v) implies (i)-(iv).
We shall show that (i) implies that rank $1$ is preserved by $T^{-1}$.

Suppose then that $T$ preserves $\mathcal{L}$ and let $B \in M_n(K)$ be a matrix of rank $1$. Write $L$ for the union of all those $\mathcal{L}$-classes which are mapped by $T$ into $\L_B$. Since $T$ is surjective we see that $T(L \cup \{0_{n \times n}\}) = \L_B \cup\{0_{n \times n}\}$. By Lemma \ref{LR-properties-rank-1} the latter is an $n$-dimensional subspace of $M_n(K)$, and so, since  $T$ is a linear isomorphism, we must have that $L \cup\{0_{n \times n}\}$ is an $n$-dimensional subspace of $M_n(K)$ too.  Notice that if every element of $L$ were of rank greater than 1, then Lemma~\ref{L-properties-rank>1} would mean that $L \cup \{0_{n \times n}\}$ was not a subspace, so we may choose a matrix $X \in L$ of rank 1. By Lemma \ref{LR-properties-rank-1} we see that $\L_X \cup \{0_{n \times n}\}$ is an $n$-dimensional subspace of $M_n(K)$. Since $L$ is a union of $\mathcal{L}$-classes notice that $\L_X \cup\{0_{n \times n}\}$ must also be a subspace of $L \cup\{0_{n \times n}\}$. But since both of these spaces have dimension $n$ we conclude that $L = \L_X$. Hence for each rank $1$ matrix $B$ we have shown that $T^{-1}(B)$ is also of rank $1$.

 Therefore (i) implies that $T^{-1}$ preserves the set of rank $1$ matrices.
 From our argument it is clear that replacing $\mathcal{L}$-classes by $\mathcal{R}$-classes throughout also shows that (ii) implies that rank $1$ is preserved by $T^{-1}$.  Since $T$ is bijective, this means that $T$ must also preserve rank $1$, so Theorem \ref{rank-1-preservers} yields that (v) holds.

\end{proof}

\begin{proposition} \label{L_strong}
Let $K$ be a field and $T\colon M_n(K) \rightarrow M_n(K)$ be a linear map which strongly preserves either $\L$ or $\R$. Then $T$ is bijective.
\end{proposition}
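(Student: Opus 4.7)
The plan is to reduce bijectivity to injectivity, using the fact that $T$ is a linear endomorphism of the finite-dimensional $K$-vector space $M_n(K)$. Thus it suffices to show that the kernel of $T$ is trivial.

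First I would observe that the $\L$-class (and the $\R$-class) of the zero matrix is $\{0_{n \times n}\}$. Indeed, by Proposition~\ref{Okn}, $A \L 0_{n \times n}$ if and only if ${\rm Row}_K(A) = {\rm Row}_K(0_{n \times n}) = \{0\}$, forcing $A = 0_{n \times n}$; similarly for $\R$. (Alternatively, this follows directly from the monoid definition: $A \L 0_{n \times n}$ means $A = X \cdot 0_{n \times n}$ for some $X \in M_n(K)$, hence $A = 0_{n \times n}$.)

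Now suppose $T$ strongly preserves $\L$ and that $T(A) = 0_{n \times n}$ for some $A \in M_n(K)$. Since $T$ is linear, $T(0_{n \times n}) = 0_{n \times n}$, so $T(A)$ and $T(0_{n \times n})$ are equal, and in particular $T(A) \L T(0_{n \times n})$. Strong preservation then gives $A \L 0_{n \times n}$, and the observation above forces $A = 0_{n \times n}$. Thus $\ker T = \{0_{n \times n}\}$, and $T$ is bijective. The argument when $T$ strongly preserves $\R$ is identical, using that the $\R$-class of $0_{n \times n}$ is also trivial.

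There is essentially no obstacle here: once one notices that $0_{n \times n}$ is alone in its $\L$-class (respectively $\R$-class), the strong preservation hypothesis trivially forces the kernel of $T$ to be zero, and finite-dimensionality of $M_n(K)$ promotes injectivity to bijectivity. The only subtlety is that the weaker preservation hypothesis (without the reverse implication) would not suffice for this argument, which is why the statement is phrased for strong preservers.
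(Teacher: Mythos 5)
Your proof is correct and follows exactly the paper's argument: the zero matrix is alone in its $\L$-class (and $\R$-class), so strong preservation forces $\ker T = \{0_{n\times n}\}$, and the rank--nullity theorem upgrades injectivity to bijectivity.
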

\begin{proof}
If $T(A) = 0_{n \times n}$ then $T(A) = T(0_{n \times n})$. Since $T$ strongly preserves $\L$ or $\R$ but $0_{n\times n}$ is neither $\L$- nor $\R$-related to any other matrix, this means $A = 0_{n\times n}$. Thus $T$ must be injective, and since $M_n(K)$ is finite dimensional and $T$ is linear, it must be a bijection by the rank-nullity theorem.
\end{proof}

\subsection{Non-bijective maps}

Note that there exist non-zero linear maps preserving $\L$ that are not bijective.

\begin{example}
\label{singularpreserver}
Each linear map $A \mapsto AX$ where $X \in M_n(K)$ clearly preserves $\L$  and $\leq_\L$ (by the definitions of $\L$ and $\leq_\L$); taking $X$ to be singular yields a map that is not bijective. Dually, the map $A \mapsto XA$ preserves $\R$.
\end{example}

\begin{corollary}\label{cor_L-preserver_form}
Each map of the form
\begin{equation}
\label{L_maps_over_alg_cl_fields}
A \mapsto PAX \mbox{ where }P \in{\rm GL}_n(K), \, X \in M_n(K)
\end{equation} 
preserves $\L$ and $\leq_\mathcal{L}$.	
\end{corollary}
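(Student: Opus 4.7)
The plan is to observe that the map $A \mapsto PAX$ factors as a composition of two maps, each of which has already been shown to preserve $\L$ and $\leq_\L$. Specifically, write $T = T_2 \circ T_1$, where $T_1(A) = PA$ and $T_2(A) = AX$. Then $T_1$ is a special case of the map in Lemma~\ref{transformations}(i) (take $Q = I_n$, which belongs to $GL_n(K)$), so $T_1$ preserves $\L$ and $\leq_\L$. On the other hand, $T_2$ preserves $\L$ and $\leq_\L$ by Example~\ref{singularpreserver}. Since $\L$ and $\leq_\L$ are binary relations and the composition of two preservers of a relation is again a preserver of that relation, $T$ preserves both $\L$ and $\leq_\L$.

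If a more self-contained argument is preferred, one can verify the preservation directly from the definitions. Suppose $A \leq_\L B$; then $A = SB$ for some $S \in M_n(K)$, so
\[
PAX \;=\; PSBX \;=\; (PSP^{-1})(PBX),
\]
which shows $PAX \leq_\L PBX$. Specialising to $A \L B$ gives also $PAX \L PBX$, since $\leq_\L$ is preserved in both directions.

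There is no real obstacle here; the statement is essentially immediate once Lemma~\ref{transformations} and Example~\ref{singularpreserver} are in hand, and it is recorded as a corollary mainly to fix the class of maps of the form \eqref{L_maps_over_alg_cl_fields} for use in the subsequent classification of non-bijective $\L$-preservers.
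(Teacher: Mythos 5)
Your proof is correct and takes essentially the same approach as the paper, which likewise deduces the corollary by combining Lemma~\ref{transformations}(i) (for $A \mapsto PA$) with Example~\ref{singularpreserver} (for $A \mapsto AX$). The additional direct verification via $PAX = (PSP^{-1})(PBX)$ is also valid but not needed.
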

\begin{proof} This follows from Theorem \ref{transformations} and Example \ref{singularpreserver}.
\end{proof}  
The main aim of this subsection is to prove that if $K$ is algebraically closed, or more generally if $K$ has roots for all polynomials of degree exactly $n$, then the maps of the form in \eqref{L_maps_over_alg_cl_fields} are the only maps which preserve $\L$ and/or $\leq_\mathcal{L}$. Before proceeding to do this, we remark that if polynomials of degree $n$ do \emph{not} have roots in $K$ then there are non-bijective maps preserving $\L$ having quite another structure, as we shall see in Section~\ref{nonbijectiveexamples} below.

We need some auxiliary lemmas to prove the main result of this subsection. 

\begin{lemma} \label{n_matrices_smaller_rank}
Let $n,k \geq 2$ and $K$ be a field in which every polynomial of degree exactly $n$ has a root. Let $B_1,\dots, B_n \in M_n(K)$ be matrices of rank $k$ lying in the same $\L$-class. Then some non-trivial linear combination of the $B_i$s has rank less than $k$.
\end{lemma}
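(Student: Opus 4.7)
My plan is to reduce the statement to the existence of a nontrivial zero of a single homogeneous polynomial of degree $n$, to which the hypothesis on $K$ can be applied. By Proposition~\ref{Okn}, the matrices $B_1,\dots,B_n$ share a common row space $W$, which has dimension $k$. I would fix any matrix $R \in M_{k \times n}(K)$ of rank $k$ whose rows span $W$, and use the fact that the rows of each $B_i$ lie in $W$ to obtain factorisations $B_i = C_i R$ with $C_i \in M_{n \times k}(K)$. The inequalities $k = \rk B_i \leq \rk C_i \leq k$ force $\rk C_i = k$ for every $i$. Because $R$ has full row rank, the assignment $X \mapsto XR$ on $M_{n \times k}(K)$ preserves rank, so it suffices to find scalars $\lambda_1,\dots,\lambda_n \in K$, not all zero, such that $\sum_i \lambda_i C_i$ has rank strictly less than $k$.

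To find such a combination, I would consider the polynomial
$$p(\mu) = \det\bigl[\, C_1\mu \mid C_2\mu \mid \cdots \mid C_n\mu\,\bigr], \qquad \mu = (\mu_1,\dots,\mu_k) \in K^k.$$
Each column of this $n \times n$ matrix is linear in $\mu$, so $p$ is homogeneous of degree $n$ in $\mu_1,\dots,\mu_k$. Any nontrivial zero $\mu$ of $p$ makes the vectors $C_1\mu,\dots,C_n\mu$ linearly dependent in $K^n$, which yields a nonzero $\lambda \in K^n$ with $\bigl(\sum_i \lambda_i C_i\bigr)\mu = 0$. Since $\mu \neq 0$ then lies in the kernel of $\sum_i \lambda_i C_i$, this matrix has rank at most $k-1$, as required.

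Producing a nontrivial zero of $p$ is where the hypothesis on $K$ enters, and is the main technical step. If $p \equiv 0$, any nonzero $\mu$ works. Otherwise, I would observe that $p(e_i)$ equals the coefficient of $\mu_i^n$ in $p$; if this vanishes for some $i$, then $\mu = e_i$ is a nontrivial zero. If every $p(e_i)$ is nonzero then, using $k \geq 2$, the substitution $\mu_2 = 1$ and $\mu_3 = \cdots = \mu_k = 0$ gives a univariate polynomial $q(t) = p(t,1,0,\dots,0)$ whose leading coefficient is $p(e_1) \neq 0$, hence of degree exactly $n$. The hypothesis on $K$ then supplies a root $t^* \in K$ of $q$, and $\mu = (t^*,1,0,\dots,0)$ is the required nontrivial zero of $p$. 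The delicate point is arranging for the univariate specialisation to have degree \emph{exactly} $n$, as required by the hypothesis; this is the reason for splitting off the case $p(e_i)=0$ first, and is also where the assumption $k \geq 2$ is used, since we need a spare coordinate in which to place the value $1$.
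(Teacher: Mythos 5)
Your proof is correct and follows essentially the same route as the paper: factor $B_i = C_iX$ through the common row space, reduce to finding a low-rank combination of the $C_i$, and apply the hypothesis on $K$ to a determinant polynomial of degree exactly $n$ built from columns of the $C_i$. The only cosmetic difference is that you package the case analysis via the homogeneous form $p(\mu)=\det[C_1\mu\mid\cdots\mid C_n\mu]$ and specialise to $\det(tA_1+A_2)$, whereas the paper works directly with $\det(A_1+xA_2)$ after first disposing of the case where some set of corresponding columns is linearly dependent.
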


\begin{proof}
Since $B_1$ has rank $k$ we may write $B_1 = C_1 X$, where $C_1 \in M_{n\times k}(K)$, $X \in M_{k\times n}(K)$ are matrices of rank $k$. Since the matrices $B_i$ ($i=2,\ldots,n$) are $\L$-related to $B_1$, it follows that we have $B_i = C_i X$ for all $i=1,\ldots,n$, where $C_i \in M_{n\times k}(K)$ and $\rk B_i=\rk C_i=k$. It therefore suffices to find $\lambda_1, \dots, \lambda_n \in K$ such that $\rk(\sum_{i=1}^n\lambda_i C_i) < k$.

Let $u_j^i$ be the $j$th column of $C_i$, ($i=1,\ldots,n$, $j=1,\ldots, k$). If $u_j^1, u_j^2, \dots, u_j^n$ are linearly dependent for some $j$, then we have $\lambda_1 u_j^1+ \lambda_2 u_j^2 + \cdots + \lambda_n u_j^n = 0$ for some $\lambda_i$ not all zero, in which case it is clear that $\rk(\sum_{i=1}^n\lambda_i C_i) < k$.

Suppose then that for each $j=1, \ldots, k$ the vectors $u_j^1, u_j^2, \dots, u_j^n$ are linearly independent. For $j=1,2$, let $A_j \in M_n(K)$ be the invertible matrix with $i$th column $u_j^i$. Consider the polynomial $f(x) = \det(A_1 + x A_2) \in K[x]$. The coefficient of $x^n$ and constant term of $f$ are equal to $\det(A_2)$ and $\det(A_1)$, respectively. Since $A_1$ and $A_2$ are invertible, this means $f$ has degree exactly $n$ and a non-zero constant term, and so by the assumption on $K$  there is a non-zero root, say $\lambda \in K\setminus \{0\}$ with $f(\lambda)=0$. This shows that the columns of $A_1 + \lambda A_2$ are linearly dependent. Thus there exist $\lambda_1, \ldots, \lambda_n \in K$ not all zero and $\lambda \neq 0$ such that
$$\lambda_1 (u_1^1+\lambda u_2^1) + \cdots + \lambda_n (u_1^n+\lambda u_2^n) = 0.$$
But this means
$\sum_{i=1}^n \lambda_i u_1^i = - \lambda \sum_{i=1}^n \lambda_i u_2^i$, in other words,  the first column of $\sum_{i=1}^n\lambda_i C_i$ is equal to a multiple of the second column. Thus  $\rk(\sum_{i=1}^n\lambda_i C_i) < k$, as desired.
\end{proof}

\begin{example}
The conclusion of Lemma \ref{n_matrices_smaller_rank} need not hold if we replace $B_1,\ldots, B_n$ by a strictly smaller collection of $\L$-equivalent matrices in $M_n(K)$, as we shall now show.
Let $u, v \in K^n$ be two linearly independent vectors. For $i = 1, 2, \dots, n - 1$, let $B_i$ denote the $n \times n$ matrix (of rank $2$) with row $i$ equal to $u$, row $i+1$ equal to $v$, and all remaining rows $0$. It is easy to see that each non-trivial linear combination of $B_1, \dots, B_{n - 1}$ is also of rank $2$.
\end{example}

\begin{corollary}
\label{rank1orless}
Let $n \geq 1$, let $K$ be a field in which every polynomial of degree exactly $n$ has a root, and let $T\colon M_n(K) \rightarrow M_n(K)$ be a linear map preserving $\L$. If $A$ is a matrix of rank $1$, then $T(A)$ has rank at most $1$.
\end{corollary}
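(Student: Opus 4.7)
The plan is to apply Lemma \ref{n_matrices_smaller_rank} to images of a carefully chosen basis of $\L_A \cup \{0_{n\times n}\}$. The case $n=1$ is trivial (rank is always at most $1$), so assume $n \geq 2$.

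First I would use Lemma \ref{LR-properties-rank-1} to observe that, since $A$ has rank $1$, the set $\L_A \cup \{0_{n \times n}\}$ is an $n$-dimensional subspace of $M_n(K)$. Consequently I can choose a basis $A_1, \ldots, A_n$ of this subspace; each $A_i$ is necessarily a nonzero element of $\L_A$, hence of rank $1$. Because $T$ preserves $\L$ and each $A_i \,\L\, A$, each image satisfies $T(A_i) \,\L\, T(A)$. If $T(A) = 0_{n\times n}$, the conclusion is immediate, so suppose $T(A) \neq 0_{n\times n}$ and write $k := \rk T(A)$; then every $T(A_i)$ is nonzero and has rank exactly $k$, and all lie in the common $\L$-class $\L_{T(A)}$.

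Next I would argue by contradiction: suppose $k \geq 2$. Then Lemma \ref{n_matrices_smaller_rank} applies to $T(A_1), \ldots, T(A_n)$ (these are $n$ matrices of rank $k \geq 2$ in a single $\L$-class of $M_n(K)$), yielding scalars $\lambda_1, \ldots, \lambda_n \in K$, not all zero, with
\[
\rk\!\left( \sum_{i=1}^n \lambda_i T(A_i) \right) < k.
\]
By linearity this equals $\rk\, T(C)$, where $C := \sum_{i=1}^n \lambda_i A_i$. Since $A_1, \ldots, A_n$ are linearly independent and the $\lambda_i$ are not all zero, $C \neq 0_{n\times n}$, and since $\L_A \cup \{0_{n \times n}\}$ is a subspace we get $C \in \L_A$. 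Preservation of $\L$ then forces $T(C) \,\L\, T(A)$.

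Finally I would derive the contradiction from the two possible fates of $T(C)$. If $T(C) \neq 0_{n\times n}$, then $T(C)$ and $T(A)$ share a rank, giving $\rk T(C) = k$, which contradicts $\rk T(C) < k$. If $T(C) = 0_{n\times n}$, then $T(A) \,\L\, 0_{n\times n}$, forcing $T(A) = 0_{n\times n}$, contrary to assumption. Hence $k \leq 1$ and $T(A)$ has rank at most $1$. The only subtle point is the bookkeeping around the zero matrix (since $0_{n\times n}$ forms its own $\L$-class), which is handled by splitting off the case $T(A) = 0_{n\times n}$ at the start; apart from that the argument is a direct application of the preceding lemma.
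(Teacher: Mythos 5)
Your proof is correct and follows essentially the same route as the paper: both reduce to Lemma \ref{n_matrices_smaller_rank} applied to the images of an $n$-element basis of $\L_A \cup \{0_{n\times n}\}$ and derive a contradiction from the fact that the corresponding linear combination stays in $\L_A$. The only cosmetic difference is that the paper works with the explicit basis of matrices having $u$ in a single row rather than an arbitrary basis, and your explicit handling of the case $T(C)=0_{n\times n}$ makes a step the paper leaves implicit.
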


\begin{proof}
Let $u \in K^n$ be a fixed non-zero vector, and for $i=1,\ldots, n$ let $B_i^u$ denote the matrix whose $i$th row is $u$ with all remaining rows equal to $0$. Since $T$ preserves $\L$, the matrices $T(B_i^u), i = 1,\ldots, n,$ have the same row space, and hence  in particular the same rank $k$. If $k \geq 2$ then $n \geq 2$ and  Lemma \ref{n_matrices_smaller_rank} applies to give $\rk(\sum_{i=1}^n(\lambda_i T(B_i^u))) < k$ for some $\lambda_1, \dots, \lambda_n \in K$ not all zero. However, since the matrix $\sum_{i=1}^n\lambda_i B_i^u$ is $\L$-equivalent to each $B_i^u$, we obtain a contradiction. This shows that the rank of each $T(B_i^u)$ is at most $1$. Since every rank $1$ matrix is $\L$-equivalent to a matrix of the form $B_1^u$ for some non-zero  vector $u$, the result follows.
\end{proof}

The previous corollary plays a key role in proving that (in particular) for algebraically closed fields $K$, the linear maps preserving $\L$ have the form prescribed in \eqref{L_maps_over_alg_cl_fields}. To describe the other key ingredients we require another definition. For each subspace $V$ of $K^n$ let
$$\rho(V) :=\{A \in M_n(K): {\rm Row}_K(A) \subseteq V\}.$$

The proof of the following proposition is omitted since it can be directly verified from the definitions.
\begin{proposition} \label{rho_properties}
	Let $V$ be a subspace of $K^n$, and let $v_1, \dots, v_k$ be a basis for $V$ (hence $\dim V = k$). Then
\begin{itemize}
	\item[(i)]$\rho(V) = \{ \sum_{j=1}^k  x_j^T v_j: x_1, \ldots, x_k \in K^n\}$;
	\item[(ii)] $\rho(V)$ is a subspace of $M_n(K)$;
	\item[(iii)] $\rho(V)$ has  dimension $nk$ with the basis
	$$\{ e_i ^Tv_j: 1 \leq i \leq n, 1 \leq j \leq k\},$$
	where $e_1, \ldots,  e_n$ denotes the standard basis of $K^n$.
\end{itemize}
\end{proposition}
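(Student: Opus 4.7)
The plan is to verify each of the three claims directly from the definitions, with (i) doing most of the heavy lifting and the others following easily.

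For (i), I would unpack the meaning of membership in $\rho(V)$. A matrix $A$ lies in $\rho(V)$ exactly when each of its rows lies in $V = {\rm span}(v_1, \ldots, v_k)$. For vectors $x_j \in K^n$ (row vectors) and $v_j \in V$, the product $x_j^T v_j$ is the $n \times n$ outer-product matrix whose $i$th row equals $(x_j)_i\, v_j$. Summing, the $i$th row of $\sum_{j=1}^k x_j^T v_j$ is $\sum_{j=1}^k (x_j)_i v_j$, which is a linear combination of the $v_j$ and hence lies in $V$. This gives the inclusion $\supseteq$. For the reverse inclusion, given $A \in \rho(V)$ with rows $r_1, \ldots, r_n$, expand each $r_i = \sum_{j=1}^k c_{ij} v_j$ (possible since $v_1, \ldots, v_k$ span $V$) and set $x_j := (c_{1j}, c_{2j}, \ldots, c_{nj}) \in K^n$. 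A direct row-by-row check confirms $A = \sum_{j=1}^k x_j^T v_j$.

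Claim (ii) is then immediate: the description in (i) exhibits $\rho(V)$ as the image of the linear map $(K^n)^k \to M_n(K)$ sending $(x_1, \ldots, x_k) \mapsto \sum_j x_j^T v_j$, so it is a linear subspace. (Alternatively, one can verify closure under sums and scalar multiples straight from the definition, since the row space of $\alpha A + \beta B$ is contained in ${\rm Row}_K(A) + {\rm Row}_K(B)$.)

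For (iii), spanning follows by combining (i) with an expansion in the standard basis: writing $x_j = \sum_{i=1}^n \alpha_{ij} e_i$, we get $x_j^T = \sum_i \alpha_{ij} e_i^T$, hence $\sum_j x_j^T v_j = \sum_{i,j} \alpha_{ij}\, e_i^T v_j$. For linear independence, suppose $\sum_{i,j} \alpha_{ij}\, e_i^T v_j = 0$; the $i$th row of this matrix is $\sum_j \alpha_{ij} v_j$, which must vanish for every $i$, and since $v_1, \ldots, v_k$ is a basis of $V$ we conclude $\alpha_{ij} = 0$ for all $i,j$. Thus $\{ e_i^T v_j : 1 \leq i \leq n,\, 1 \leq j \leq k\}$ is a basis and $\dim \rho(V) = nk$.

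No step presents a genuine obstacle — the only thing to be careful about is the conventions regarding row vs.\ column vectors, so that $x_j^T v_j$ is parsed correctly as an outer product, and the row identification in the independence argument uses the basis property of $v_1, \ldots, v_k$ rather than merely spanning.
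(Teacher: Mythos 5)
Your proof is correct, and the paper itself omits the proof of this proposition with the remark that it ``can be directly verified from the definitions'' --- which is exactly what you do. Your direct row-by-row verification (outer-product interpretation of $x_j^T v_j$, spanning from (i), and independence via the basis property of $v_1,\dots,v_k$) is the intended argument.
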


The  next statement follows immediately from the above.
\begin{proposition} \label{prop:UV}
Let $U,V$ be subspaces of $K^n$ such that $K^n=U\oplus V$. Then $M_n(K)=\rho(U)\oplus \rho(V).$
\end{proposition}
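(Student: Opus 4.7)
The plan is to verify the two defining conditions of a direct sum: trivial intersection, and spanning. Both follow essentially by applying the hypothesis $K^n = U \oplus V$ row-by-row.

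First, I would show that $\rho(U) \cap \rho(V) = \{0_{n \times n}\}$. If $A \in \rho(U) \cap \rho(V)$, then by definition $\mathrm{Row}_K(A) \subseteq U$ and $\mathrm{Row}_K(A) \subseteq V$, so $\mathrm{Row}_K(A) \subseteq U \cap V = \{0\}$; hence every row of $A$ is zero, i.e.\ $A = 0_{n\times n}$.

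Next, for the sum, I would argue by decomposing rows. Given any $A \in M_n(K)$ with rows $a_1, \dots, a_n \in K^n$, the decomposition $K^n = U \oplus V$ provides unique $u_i \in U$ and $v_i \in V$ with $a_i = u_i + v_i$ for each $i$. Let $A_U$ be the matrix whose $i$th row is $u_i$ and $A_V$ the matrix whose $i$th row is $v_i$. Then $A_U \in \rho(U)$, $A_V \in \rho(V)$, and $A = A_U + A_V$, giving $M_n(K) = \rho(U) + \rho(V)$. Combined with the previous paragraph, this yields the claimed direct sum decomposition.

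As a sanity check (and an alternative finish once trivial intersection is established), I could instead invoke Proposition~\ref{rho_properties}(iii): writing $k = \dim U$ and $\ell = \dim V$, we have $k + \ell = n$, so $\dim \rho(U) + \dim \rho(V) = nk + n\ell = n^2 = \dim M_n(K)$, and together with $\rho(U) \cap \rho(V) = \{0_{n\times n}\}$ this forces $M_n(K) = \rho(U) \oplus \rho(V)$. There is no real obstacle here; the only thing to be careful about is not confusing the row-space condition with a column-space condition, since $\rho$ is defined via rows.
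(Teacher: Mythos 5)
Your proof is correct. The trivial-intersection step is identical to the paper's: both observe that $A \in \rho(U) \cap \rho(V)$ forces $\mathrm{Row}_K(A) \subseteq U \cap V = \{0\}$, hence $A = 0_{n\times n}$. For the spanning step you diverge: you give a direct, constructive argument, decomposing each row $a_i = u_i + v_i$ via $K^n = U \oplus V$ and assembling the matrices $A_U$ and $A_V$ row by row, which immediately gives $M_n(K) = \rho(U) + \rho(V)$ without any dimension bookkeeping. The paper instead skips an explicit spanning argument and closes by counting dimensions via Proposition~\ref{rho_properties}(iii), exactly as in your ``sanity check'' paragraph. Your constructive route is arguably more elementary (it does not rely on finite-dimensionality or on having computed $\dim \rho(\cdot)$), while the paper's dimension count is shorter given that Proposition~\ref{rho_properties}(iii) is already available. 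Since you also include the dimension-count finish, your write-up subsumes the paper's argument; either version is acceptable.
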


\begin{proof}
	We show first that $\rho(U) \cap \rho(V) = \{0_{n\times n}\}$. Indeed, assume the contrary, that is, that $A \in \rho(U) \cap \rho(V)$ for some $A \in M_n(K)$. Then ${\rm Row}_K(A) \subseteq U$ and ${\rm Row}_K(A) \subseteq V$. Hence $A = 0_{n \times n}$, since $U$ and $V$ share no vectors in common apart from the zero vector.
		Furthermore, by Proposition \ref{rho_properties}(iii), $\dim \rho(U) + \dim \rho(V) = n(\dim U + \dim V) = n^2$. Thus $M_n(K)=\rho(U)\oplus~\rho(V).$
\end{proof}

We shall use the following two technical lemmas, the first of which holds without any assumptions on~$K$. Let $T: M_n(K) \rightarrow M_n(K)$ be a linear map. We shall need to consider the set
$$V_T:=\{v \in K^n: v \mbox{ is a row of some } A\in M_n(K) \mbox{ with } T(A)=0_{n \times n} \}.$$ 
\begin{lemma}
\label{lem:rho}
Let $K$ be a field and $T: M_n(K) \rightarrow M_n(K)$ be a linear map preserving $\L$. Then
\begin{itemize}
\item[(i)] the set $V_T$ is a subspace of $K^n$;
\item[(ii)] $\rho(V_T)={\rm ker}(T)$;
\item[(iii)] if $U$ is a subspace of $K^n$ with $K^n=V_T \oplus U$ then
$$M_n(K) = {\rm ker}(T) \oplus \rho(U),$$
and in particular $T$ is injective on $\rho(U)$.
\end{itemize}
\end{lemma}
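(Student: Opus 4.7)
The proof splits into three parts, with (ii) requiring essentially all the work and (i) and (iii) following quickly. A key preliminary observation is that $\ker T$ is a union of $\L$-classes: since $T$ preserves $\L$ and the zero matrix is not $\L$-related to any other matrix, $T(A)=0$ forces $T(B)=0$ for every $B\,\L\, A$. In particular, left-multiplying a kernel element by a permutation matrix keeps it in $\ker T$. Closure of $V_T$ under scalar multiplication is immediate from linearity; for closure under addition, I would take $u,v\in V_T$, permute the rows of the witnessing kernel matrices so that both $u$ and $v$ appear in position $1$, and then add the resulting matrices.

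The heart of the argument is (ii). The inclusion $\ker T \subseteq \rho(V_T)$ is immediate from the definition of $V_T$. For the reverse, by Proposition~\ref{rho_properties}(iii) it suffices to show that $e_i^T v \in \ker T$ for every $v \in V_T$ and every $i$. Given such $v$, I would pick $A \in \ker T$ having $v$ as a row and, by a row-permutation, assume $v$ is the first row. If $v=0$ the claim is trivial. If $v\neq 0$ and $\rk A =1$, then ${\rm Row}(A)=\langle v\rangle$, so each $e_i^T v$ has the same row space as $A$, is therefore $\L$-equivalent to $A$, and hence lies in $\ker T$. The remaining case is $\rk A \geq 2$: some row $a_k$ with $k\geq 2$ is not a scalar multiple of $v$, and multiplying by the invertible elementary matrix $P$ that adds row~$1$ to row~$k$ produces $PA\,\L\, A$, still in $\ker T$, whose difference $PA - A$ equals $e_k^T v$, a rank-$1$ matrix in $\ker T$. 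This reduces the problem to the rank-$1$ case just handled. This row-extraction step is the crux and the main obstacle: starting only from the facts that $\ker T$ is a linear subspace and a union of $\L$-classes, we must isolate individual rows as rank-$1$ matrices, and the elementary row operation trick is what allows this using only invertibility of $P$ together with linearity.

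Part (iii) is then immediate: combining (ii) with Proposition~\ref{prop:UV} gives $M_n(K)=\rho(V_T)\oplus \rho(U) = \ker T \oplus \rho(U)$, and injectivity of $T$ on $\rho(U)$ follows since $\ker T \cap \rho(U)=\rho(V_T)\cap \rho(U)=\{0_{n\times n}\}$.
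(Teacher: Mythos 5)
Your proof is correct and follows essentially the same route as the paper: the same permute-rows-and-add argument for (i), the same reduction of (ii) to producing a single matrix $e_k^T v$ in $\ker(T)$ by adding row $1$ to another row of a kernel witness (your $PA-A$ is exactly the paper's $A'-A$) and then spreading to all $i$ via $\L$-equivalence, and the same appeal to Proposition~\ref{prop:UV} for (iii). The explicit rank case-split and the use of elementary matrices are only cosmetic differences.
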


\begin{proof}
(i) Let $v,w \in V_T$. Thus $v$ is a row of $A$ and $w$ is a row of $B$ where $T(A)=T(B)= 0_{n \times n}$. For $\lambda \in K$ it is clear that $\lambda v$ is a row of $\lambda A$ and $T(\lambda A)=0_{n \times n}$, showing that $\lambda v \in V_T$. Since $T$ preserves $\L$ we may assume (by permuting rows and applying Proposition~\ref{Okn}(i)) that $v$ is the first row of $A$ and $w$ is the first row of $B$. Now $T(A+B) = 0_{n \times n}$ and $A+B$ has first row $v+w$, so $v+w \in V_T$. Thus, $V_T$ is a subspace of~$K^n$.

(ii) 
It is clear from the definitions and (i) that ${\rm ker}(T) \subseteq \rho(V_T)$. We prove the other inclusion. Let $v_1, \ldots , v_k$ be a basis for $V_T$. By Proposition \ref{rho_properties}(iii), $\{e_i^Tv_j: 1 \leq  i \leq n, 1 \leq j \leq k\}$ is a basis for $\rho(V_T)$. Hence it suffices to show that each matrix $e_i^Tv_j \in {\rm ker}(T)$. Furthermore,  for each fixed $j$, it suffices to show that $e_i^Tv_j \in {\rm ker}(T)$ for some $i$, since $T$ preserves $\L$ and all the matrices $\{e_i^Tv_j : i = 1, \dots, n\}$ are $\L$-equivalent.

For $n = 1$ it is clear, so let $n \geq 2$. Since $v_j \in V_T$ there exists a matrix $A$ with some row equal to $v_j$, say row $p$, and $T(A)=0_{n \times n} $. The matrix $A'$ obtained from $A$ by adding $v_j$ to row $i$ for some $i \neq p$ then has the same row space as $A$ that is, $A' \L A$. Since $T$ preserves $\L$ this gives $T(A')=T(A)=0_{n \times n}$.  By the linearity of $T$ we see that the matrix $e_i^T v_j = A'-A$ satisfies $T(e_i^Tv_j)=T(A'-A)=T(A') - T(A)=0_{n \times n}$.

Part (iii) is a direct consequence of (ii) and Proposition~\ref{prop:UV}.
\end{proof}

\begin{remark}
An interesting algebraic consequence of Lemma~\ref{lem:rho} is that the kernel of any linear $\L$-preserver on $M_n(K)$ is a principal left ideal of $M_n(K)$ (regarded either  as a non-commutative ring, or as a multiplicative semigroup). Indeed, by the lemma
the kernel has the form $\rho(V_T)$ where $V_T$ is a subspace of $K^n$ and hence has dimension at most $n$. If we choose a matrix $X$ whose rows form a spanning set for $V_T$ then it is easy to verify that $\rho(V_T)$ is exactly the principal left ideal $M_n(K) X$. 
\end{remark}

For each row vector $u \in K^n$ write $L_u$ to denote the $\L$-class of $M_n(K)$ whose elements have row space $\langle u \rangle$. Thus $L_0=\{0_{n \times n}\}$, and for each non-zero $u$, the class
$$L_u \ = \ \{x^Tu: x \in K^n, x \neq 0\} \ = \ \rho(\langle u \rangle) \setminus \lbrace 0_{n \times n} \rbrace$$
 is a set of rank $1$ matrices. Corollary \ref{rank1orless} tells us that when $K$ contains a root for every polynomial of degree $n$, a linear map preserving $\L$ must map each $L_u$ into some $L_w$. The following lemma helps  to  make precise the ways in which this can happen.

\begin{lemma}
\label{lem:LutoLw}
Let $n \geq 1$ and let $K$ be a field in which every polynomial of degree exactly $n$ has a root. Let $T: M_n(K) \rightarrow M_n(K)$ be a linear map preserving $\L$. Let $K^n = V_T \oplus U$ and $\{u_1,\ldots, u_s\}$ be a basis for $U$. Then the following properties hold.
\begin{itemize}
\item[(i)] For $v \in K^n$, $T(L_v)=\{0_{n \times n} \}$ if and only if $v \in V_T$. 
\item[(ii)] For each $i=1,\ldots, s$, there exists a non-zero vector $w_i$ such that $T(L_{u_i})=L_{w_i}$.  The vectors $w_i$ are determined up to scalar factors.
\item[(iii)] The subspace $W:=\langle w_1, \ldots, w_s\rangle$ (which does not depend upon the choice of non-zero vectors from part (ii)) has dimension $s$.
\item[(iv)] $T(\rho(U))=\rho(W)$. 
\item[(v)] For each vector $w_i$ given in part (ii), the map $\varphi_i\colon K^n \rightarrow K^n$ defined by $\varphi_i(x)=y$ whenever $T(x^Tu_i)=y ^Tw_i$, is a linear isomorphism.
\item[(vi)] For each $i=1,\ldots, s$, there exists $c_i \in K\setminus \{0\}$ such that $\varphi_1 = c_i \varphi_i$.
\end{itemize}
\end{lemma}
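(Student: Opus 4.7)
The plan is to prove the six assertions using two main tools: Corollary~\ref{rank1orless}, which bounds the rank of the image of any rank-$1$ matrix by $1$, and the direct-sum decomposition $M_n(K) = \ker(T) \oplus \rho(U)$ from Lemma~\ref{lem:rho}(iii). Part (i) I would derive directly from Lemma~\ref{lem:rho}(ii): if $v \in V_T$, then every $x^T v$ has row space in $V_T$ hence lies in $\rho(V_T) = \ker(T)$; conversely, if $v \neq 0$ and $T(L_v) = \{0_{n \times n}\}$, then choosing any $x \neq 0$ puts a nonzero scalar multiple of $v$ (a row of $x^T v \in \ker(T) = \rho(V_T)$) into $V_T$, forcing $v \in V_T$. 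I would prove (ii) and (v) together. Fixing $i$ and setting $u = u_i$, since $u \notin V_T$ part (i) gives $T(L_u) \neq \{0_{n \times n}\}$, and Corollary~\ref{rank1orless} combined with $\L$-preservation shows every nonzero $T(x^T u)$ has rank exactly $1$ with a common row space $\langle w_i \rangle$ for some nonzero $w_i$. Writing $T(x^T u) = \varphi_i(x)^T w_i$ defines $\varphi_i$ uniquely, and linearity of $T$ makes $\varphi_i$ linear; injectivity follows because $\varphi_i(x) = 0$ would put $x^T u \in \rho(V_T)$, impossible for $x \neq 0$ and $u \notin V_T$. Hence $\varphi_i$ is an isomorphism, giving (v), and surjectivity yields $T(L_{u_i}) = L_{w_i}$. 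The $\L$-class $L_{w_i}$ determines $\langle w_i \rangle$, fixing $w_i$ up to scalar.

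For (iii), suppose $\sum_i \lambda_i w_i = 0$ with some $\lambda_{i_0} \neq 0$. Picking any $y \neq 0$ and $\xi_i = \varphi_i^{-1}(y)$, the matrix $A = \sum_i \lambda_i \xi_i^T u_i$ lies in $\rho(U)$ and satisfies
\begin{equation*}
T(A) = \sum_i \lambda_i \varphi_i(\xi_i)^T w_i = y^T \textstyle\sum_i \lambda_i w_i = 0_{n \times n}.
\end{equation*}
But $A \neq 0_{n \times n}$: each row of $A$, expanded in the basis $u_1, \ldots, u_s$ of $U$, has $u_{i_0}$-coefficient $\lambda_{i_0}(\xi_{i_0})_j$ in row $j$, which is nonzero for some $j$ since $\xi_{i_0} \neq 0$. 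This contradicts $\ker(T) \cap \rho(U) = \{0_{n\times n}\}$ from Lemma~\ref{lem:rho}(iii), so $w_1, \ldots, w_s$ are linearly independent and $\dim W = s$; independence of $W$ from the choices is immediate since each $w_i$ is fixed up to scalar. For (iv), the identity $T(e_k^T u_i) = \varphi_i(e_k)^T w_i \in \rho(W)$ on the spanning set from Proposition~\ref{rho_properties}(iii) gives $T(\rho(U)) \subseteq \rho(W)$, and equality follows by matching dimensions $ns$ via injectivity of $T$ on $\rho(U)$.

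For (vi), I would use the following linear-algebra observation: if $w_1, w_i$ are linearly independent, then $y^T w_1 + z^T w_i$ has rank at most $1$ if and only if $y, z \in K^n$ are linearly dependent, as is seen by examining rows. Applying Corollary~\ref{rank1orless} to the rank-$1$ matrix $x^T(u_1 + u_i)$ (noting $u_1 + u_i \in U \setminus \{0\}$) gives that $T(x^T(u_1 + u_i)) = \varphi_1(x)^T w_1 + \varphi_i(x)^T w_i$ has rank at most $1$, so $\varphi_i(x) = d_i(x) \varphi_1(x)$ for some $d_i(x) \in K$ whenever $x \neq 0$. A standard argument on a pair of linearly independent $x, x'$ (applying $\varphi_i(x+x') = \varphi_i(x) + \varphi_i(x')$ and using that $\varphi_1(x), \varphi_1(x')$ are linearly independent) shows that $d_i$ is constant; it is nonzero since $\varphi_i$ is an isomorphism, so setting $c_i = d_i^{-1}$ yields $\varphi_1 = c_i \varphi_i$. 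I expect the main obstacle to lie in (iii): producing a nonzero element of $\ker(T) \cap \rho(U)$ from a putative linear dependence of the $w_i$'s requires carefully combining the preimages $\varphi_i^{-1}(y)$ and verifying nonvanishing row-by-row in the basis of $U$, and this is the step which genuinely uses that each $\varphi_i$ is an isomorphism rather than merely nonzero.
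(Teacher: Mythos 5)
Your proposal is correct and follows essentially the same route as the paper's proof, resting on Corollary~\ref{rank1orless} together with the decomposition $M_n(K)=\ker(T)\oplus\rho(U)$ from Lemma~\ref{lem:rho}(iii). The only differences are cosmetic: you establish (v) before (ii) and obtain $T(L_{u_i})=L_{w_i}$ from bijectivity of $\varphi_i$ rather than from the dimension count of Lemma~\ref{LR-properties-rank-1}, and in (iii) you choose preimages via $\varphi_i^{-1}$ where the paper simply invokes the surjectivity already provided by (ii).
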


\begin{proof}
(i) If $T(L_v)=\{0_{n \times n}\}$, then in particular $T(A)=0_{n \times n}$ where $A$ is the matrix with all rows equal to $v$, and so $v \in V_T$.  Conversely, if $v \in V_T$ we have $L_v   \subseteq \rho(\langle v \rangle) \subseteq \rho(V_T) ={\rm ker}(T)$,  where the inclusions follow from the definition of $\rho$ and the equality is given by part (ii) of Lemma \ref{lem:rho}. Thus $T(L_v)=\{0_{n \times n}\}$.

(ii) For each $i=1, \ldots, s$ it follows from Corollary \ref{rank1orless} that $T(L_{u_i}) \subseteq L_{w_i}$ for some $w_i \in K^n$. Recalling that $L_0 = \{0_{n \times n}\}$, part (i) gives that each $w_i$ is non-zero. Thus
$$T(L_{u_i} \cup \{0_{n \times n}\}) \subseteq L_{w_i} \cup \{0_{n \times n}\} \neq \{0_{n \times n}\}$$
and, by Lemma \ref{LR-properties-rank-1},  $L_{u_i} \cup \{0_{n \times n}\}$ and $L_{w_i} \cup \{0_{n \times n}\}$ are linear subspaces of the same dimension $n$. By part (iii) of Lemma \ref{lem:rho} we have that $T$ is injective on $\rho(U)$. Since $L_{u_i} \cup \{0_{n \times n}\} \subseteq \rho(U)$ it now follows that $T(L_{u_i}) = L_{w_i}$. It is clear from the definition that $L_x=L_y$ if and only if $x$ is a non-zero scalar multiple of $y$, and so the elements $w_i$ are determined up to scalar multiplication.

(iii) Suppose that $\sum_{i=1}^s \lambda_i w_i = 0$ for some $\lambda_1, \dots, \lambda_s \in K$. For each $i$ let $B_i$ denote the matrix  in $L_{w_i}$ with all rows equal to $w_i$. By part (ii) we have $T(L_{u_i})=L_{w_i}$, and hence there exists an element of $L_{u_i}$ mapping to $B_i$ under $T$. Since each $u_i \neq 0$, we have that $L_{u_i} = \{x^Tu_i: x \in K^n, x \neq 0\}$ and hence for each $i$ there exists a non-zero vector $x_i$ such that $T(x_i^Tu_i)=B_i$. Thus 
$$T\left(\sum_{i=1}^{s} \lambda_i x_i^Tu_i\right) =\sum_{i=1}^{s} \lambda_i T(x_i^Tu_i) = \sum_{i=1}^{s} \lambda_i B_i = 0_{n \times n}.$$ 
Now, $\sum_{i=1}^{s} \lambda_i x_i^Tu_i \in \rho(U) \cap {\rm ker}(T)$ and so, by part (iii) of Lemma \ref{lem:rho},
$$\sum_{i=1}^{s} \lambda_i x_i^Tu_i = 0_{n \times n}.$$
Since the $u_i$ are linearly independent and the $x_i$ are non-zero, it is straightforward to verify that $\lambda_i=0$ for all $i$.

(iv) Recall that $U$ has basis $\{u_1, \ldots, u_s\}$. By Proposition \ref{rho_properties}(iii) it follows that $\rho(U)$ has basis
$$\{e_i^Tu_j: 1 \leq i \leq n , 1 \leq j \leq s\}$$ and for all $j = 1, 2, \dots, s$, $L_{u_j} \cup \{0_{n \times n}\} = \rho(\langle u_j \rangle)$ has basis
$\{e_i^Tu_j: 1 \leq i \leq n\}$, where $e_1, \ldots,  e_n$ denotes the standard basis of $K^n$. Thus
$$T(\rho(U)) = \langle T(e_i^Tu_j): 1 \leq i \leq n , 1 \leq j \leq s\rangle  = \langle  L_{w_1} \cup \cdots \cup L_{w_s} \rangle \subseteq \rho(W).$$
By Lemma~\ref{lem:rho}(iii) $T$ is injective on $\rho(U)$, and so the result will follow by observing that  $\rho(U)$ and $\rho(W)$ have the same dimension. By assumption $U$ has dimension $s$, and by part (iii) $W$ also has dimension $s$. It now follows from Proposition \ref{rho_properties}(iii) that  $\rho(U)$ and $\rho(W)$ both have dimension $ns$, giving  $T(\rho(U))=\rho(W)$.
  
(v) Recall that for a non-zero $u \in K^n$, $L_{u} = \{x^Tu: x \in K^n, x \neq 0\}$. Thus it follows from the definition of $w_i$ (in part (ii)) that a map $\varphi_i$
 with the given property is well defined.
Let $x,v,y,z \in K^n$ with $T(x^Tu_i)=y^Tw_i$ and  $T(v^Tu_i)=z^Tw_i$. By linearity of $T$ we have $T((x+v)^Tu_i)=(y+z)^Tw_i$ and $T((\lambda x)^T u_i)=(\lambda y)^T w_i$. Thus $\varphi_i(x+v) = \varphi_i(x) + \varphi_i(v)$ and $\varphi_i(\lambda x)= \lambda \varphi_i(x)$.  By definition $\varphi_i(x)=0$ if and only if $T(x^Tu_i)=0_{n \times n}$. Since $x^Tu_i \in \rho(U)$  and $\rho(U) \cap {\rm ker}(T)=\{0_{n \times n}\}$ (by Lemma \ref{lem:rho} (iii)), we find that $\varphi_i(x)=0$ if and only if $x=0$. Thus $\varphi_i$ is a linear injection, and hence a linear isomorphism.
 
(vi) Clearly we can take $c_1=1$. Suppose then that $2 \leq i \leq s$. Let $x \in K^n \setminus \{0\}$ and consider
 $$T(x^T(u_1+u_i)) = T(x^Tu_1) + T(x^Tu_i) = \varphi_1(x)^T w_1 + \varphi_i(x)^T w_i.$$
By Corollary~\ref{rank1orless}, 
 the image of $x^T(u_1+u_i)$ under $T$ must have rank $0$ or $1$. Since $x$ is non-zero and both $\varphi_1$ and $\varphi_i$ are linear isomorphisms, we must have that $\varphi_1(x) \neq 0 \neq \varphi_i(x)$. Since $w_1$ and $w_i$ are linearly independent, in order for $\varphi_1(x)^T w_1 + \varphi_i(x)^T w_i$ to have rank $0$ or $1$, we require that $\varphi_1(x)=c_x \varphi_i(x)$ for some $c_x \in K \setminus \{0\}$. We show that this scalar is independent of $x$.

If $y=\lambda x$ for  $\lambda \neq 0$, then
$$\varphi_1(y)=\lambda \varphi_1(x) = \lambda c_x \varphi_i(x) = c_x \varphi_i(y)$$
and so we deduce that $c_x=c_y$.

Suppose then that $x$ and $y$ are linearly independent. Since $\varphi_i$ is an isomorphism, we see that $\varphi_i(x)$ and $\varphi_i(y)$ are linearly independent too. Then
\begin{eqnarray*}
c_y\varphi_i(y) + c_x \varphi_i(x) &=& \varphi_1(y)+\varphi_1(x) \ = \ \varphi_1(y+x)\\
&=&c_{y+x} \varphi_i(y+x) \ = \ c_{y+x} \varphi_i(y) + c_{y+x} \varphi_i(x),
\end{eqnarray*}
 giving $c_y=c_x=c_{y+x}$. Thus for all $x,y \in K^n \setminus \{0\}$ we found that $c_x=c_y \neq 0$. In other words, $\varphi_1 = c \varphi_i$ for some $c \in K\setminus \{0\}$.
\end{proof}

We are now in position to prove the main result of this section.

\begin{theorem}\label{thm:Lsing}
Let $n \geq 1$ and let $K$ be a field in which every polynomial of degree $n$ has a root. The linear maps preserving the $\L$ relation on $M_n(K)$ are precisely those of the form $A \mapsto PAX$, where $P \in {\rm GL}_n(K)$ and $X \in M_n(K)$.
\end{theorem}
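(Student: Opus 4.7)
The reverse inclusion (that maps of the form $A \mapsto PAX$ are $\L$-preservers) is already given by Corollary~\ref{cor_L-preserver_form}, so the plan is to show that every linear $\L$-preserver $T\colon M_n(K) \to M_n(K)$ has this form. The zero map is of this form (take $X = 0$), so assume $T$ is non-zero. The strategy is to assemble $P$ from the linear isomorphism $\varphi_1$ supplied by Lemma~\ref{lem:LutoLw}, and to build $X$ as the right-multiplication operator that carries the basis vectors of $U$ to appropriate multiples of the $w_i$'s and annihilates $V_T$.

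First I would invoke Lemma~\ref{lem:rho} to write $\ker(T) = \rho(V_T)$ and fix a complement $U$ with $K^n = V_T \oplus U$, so that $M_n(K) = \rho(V_T) \oplus \rho(U)$ and $T$ is injective on $\rho(U)$. Fix a basis $u_1, \dots, u_s$ for $U$ (note that $s \geq 1$ since $T \neq 0$) and extend by a basis $v_1, \dots, v_{n-s}$ of $V_T$ to a basis of $K^n$. Apply Lemma~\ref{lem:LutoLw} to obtain non-zero vectors $w_1, \dots, w_s$ (spanning an $s$-dimensional subspace $W$) together with linear isomorphisms $\varphi_i\colon K^n \to K^n$ satisfying $T(x^T u_i) = \varphi_i(x)^T w_i$ and scalars $c_i \in K \setminus \{0\}$ with $\varphi_1 = c_i \varphi_i$ (so $c_1 = 1$).

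Now I would define $P \in \mathrm{GL}_n(K)$ to be the unique invertible matrix representing $\varphi_1$ on column vectors: $Px^T = \varphi_1(x)^T$ for every $x \in K^n$. For each $i$, rewrite
$$T(x^T u_i) \ = \ c_i^{-1}\varphi_1(x)^T w_i \ = \ Px^T\bigl(c_i^{-1} w_i\bigr).$$
Then define $X \in M_n(K)$ to be the matrix acting on row vectors by $u_i X = c_i^{-1} w_i$ for $i = 1, \dots, s$ and $v_k X = 0$ for $k = 1, \dots, n-s$; this uniquely determines $X$ on the chosen basis of $K^n$ and hence as an element of $M_n(K)$.

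Finally I would verify $T(A) = PAX$ for all $A \in M_n(K)$ by decomposing $A = A_0 + A_1$ with $A_0 \in \rho(V_T) = \ker(T)$ and $A_1 \in \rho(U)$. Since every row of $A_0$ lies in $V_T$, the construction of $X$ gives $A_0 X = 0$, hence $PA_0X = 0 = T(A_0)$. Writing $A_1 = \sum_{j=1}^s x_j^T u_j$ (via Proposition~\ref{rho_properties}(i)), right multiplication by $X$ yields $A_1 X = \sum_j x_j^T(c_j^{-1} w_j)$, so that
$$PA_1 X \ = \ \sum_{j=1}^s (Px_j^T)\bigl(c_j^{-1}w_j\bigr) \ = \ \sum_{j=1}^s \varphi_j(x_j)^T w_j \ = \ \sum_{j=1}^s T(x_j^T u_j) \ = \ T(A_1),$$
completing the argument. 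The main technical obstacle has already been absorbed into Lemma~\ref{lem:LutoLw}, which required the hypothesis on roots of degree-$n$ polynomials; what remains here is essentially the bookkeeping that packages the isomorphism $\varphi_1$ and the scalars $c_i^{-1}$ into a single pair $(P,X)$.
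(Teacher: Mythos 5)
Your proposal is correct and follows essentially the same route as the paper's proof: both define $P$ via $\varphi_1$, define $X$ (the paper's $\chi$) by sending $u_i \mapsto c_i^{-1}w_i$ and killing $V_T$, and then verify $T(A)=PAX$ by linearity using Lemma~\ref{lem:LutoLw} parts (v) and (vi); your check via the decomposition $M_n(K)=\rho(V_T)\oplus\rho(U)$ is just a repackaging of the paper's basis-by-basis verification. Your explicit separate treatment of the zero map is a small but welcome point of care, since Lemma~\ref{lem:LutoLw} supplies no $\varphi_1$ when $s=0$.
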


\begin{proof} We have already observed in Corollary \ref{cor_L-preserver_form} that each such map preserves $\L$. We show that if $T\colon M_n(K) \rightarrow M_n(K)$ is a linear map preserving $\L$, then there exist $P \in {\rm GL}_n(K)$ and $X \in M_n(K)$ such that $T(A)=PAX$ for all $A \in M_n(K)$. Let $V_T \subseteq K^n$ be the subspace defined  before Lemma \ref{lem:rho}, and let $U$ be a subspace satisfying $K^n=V_T \oplus U$. Let $\{v_1,\ldots , v_k\}$ be a basis of $V_T$ and $\{u_1, \ldots, u_s\}$ (where $s=n-k$) be a basis of $U$. By Lemma \ref{lem:LutoLw}(ii) there exist non-zero vectors $w_1,\ldots, w_s \in K^n$ with $T(L_{u_i})=L_{w_i}$. For $i=1, \ldots, s$, let $\varphi_i$ be the linear isomorphisms described in Lemma \ref{lem:LutoLw}(v). By Lemma \ref{lem:LutoLw}(vi) there exist non-zero scalars $c_i$ such that $\varphi_1  =  c_i \varphi_i$. Since $\{u_1, \ldots, u_s, v_1, \ldots, v_k\}$ is a basis for $K^n$, we can define a linear map $\chi: K^n \rightarrow K^n$ by  $\chi(u_i)=c_i^{-1} w_i$ for $i=1,\ldots, s$ and $\chi(v_i)= 0$ for $i=1, \ldots , k$. 

Since $\varphi_1: K^n \rightarrow K^n$  is a linear isomorphism there exists a unique $P\in {\rm GL}_n(K)$ with $\varphi_1(y)=yP^T$ for all $y \in K^n$. Likewise, since $\chi: K^n \rightarrow K^n$ is a linear map there exists $X \in M_n(K)$ such that $\chi(y)=yX$ for all $y \in K^n$.

We claim that $T(A)=PAX$ for all $A \in M_n(K)$. Since both sides of the equation are linear in $A$, it will suffice to show that $T(A) = PAX$ for all $A$ in some
basis for $M_n(K)$. Since $\{u_1, \ldots, u_s, v_1, \ldots, v_k\}$ is a basis for $K^n$, we have a basis $\{ e_j^T u_i \mid 1 \leq j \leq n, 1 \leq i \leq s \} \cup \{ e_j^T v_i \mid 1 \leq j \leq n, 1 \leq i \leq k \}$ for $M_n(K)$.
Now for $1 \leq j \leq n$ and $1 \leq i \leq k$ we have  
$$P(e_j^T v_i)X = P e_j^T (v_i X) = P e_j^T \chi(v_i) =  P e_j^T 0 = 0_{n \times n} = T(e_j^T v_i),$$
since $e_j^T v_i \in \rho(V_T) = {\rm ker}(T)$ by Lemma \ref{lem:rho}(ii).
Furthermore, for $1 \leq j \leq n$ and $1 \leq i \leq s$ we have 
\begin{align*}
P(e_j^T u_i)X = (P e_j^T) (u_i X) = \varphi_1(e_j)^T \chi(u_i) = \ &c_i^{-1} \varphi_1(e_j)^T w_i \\
&= \varphi_i(e_j)^T w_i = T(e_j^T u_i),
\end{align*}
using Lemma~\ref{lem:LutoLw} parts (v) and (vi).
\end{proof}

By a left-right dual arguments (replacing row spaces with with column spaces throughout the above) one obtains:
\begin{theorem} \label{thm:Rsing}
Let $n \geq 1$ and let $K$ be a field in which every polynomial of degree $n$ has a root. The linear maps preserving the $\R$ relation on $M_n(K)$ are precisely those of the form $A \mapsto XAP$, where $P \in {\rm GL}_n(K)$ and $X \in M_n(K)$.
\end{theorem}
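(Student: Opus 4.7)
My plan is to deduce Theorem~\ref{thm:Rsing} from Theorem~\ref{thm:Lsing} using the transpose map as a bridge between the $\R$ and $\L$ relations. The first step is to observe that if $T \colon M_n(K) \to M_n(K)$ is a linear $\R$-preserver, then the map $S \colon M_n(K) \to M_n(K)$ defined by $S(A) = T(A^T)^T$ is a linear $\L$-preserver. Indeed, linearity is immediate from the linearity of $T$ and of transposition, and the transpose map exchanges $\L$ with $\R$ (see Lemma~\ref{transformations}(ii)), so if $A \, \L \, B$ then $A^T \, \R \, B^T$, whence $T(A^T) \, \R \, T(B^T)$, and transposing again gives $S(A) \, \L \, S(B)$.

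Next I would apply Theorem~\ref{thm:Lsing} to $S$ to obtain $P \in {\rm GL}_n(K)$ and $X \in M_n(K)$ such that $S(A) = PAX$ for every $A \in M_n(K)$. Unravelling the definition of $S$, this says $T(A^T)^T = PAX$ for all $A$, so $T(A^T) = X^T A^T P^T$. Substituting $B = A^T$ and setting $X' = X^T \in M_n(K)$ and $P' = P^T \in {\rm GL}_n(K)$, we obtain $T(B) = X' B P'$ for all $B \in M_n(K)$, which is exactly the claimed form.

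For the converse I would verify that every map $A \mapsto XAP$ with $P \in {\rm GL}_n(K)$ and $X \in M_n(K)$ preserves $\R$. This is the left-right dual of Corollary~\ref{cor_L-preserver_form}: the map $A \mapsto XA$ preserves $\R$ (and $\leq_\R$) by the dual remark in Example~\ref{singularpreserver}, the map $A \mapsto AP$ preserves $\R$ by Lemma~\ref{transformations}(i), and the composition of two $\R$-preservers is again an $\R$-preserver.

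I do not anticipate any serious obstacle here, since the heavy lifting was done in the proof of Theorem~\ref{thm:Lsing}; the work consists only in routing an $\R$-preserver through transposition to produce an $\L$-preserver and then carefully tracking how the invertible factor $P$ and the arbitrary factor $X$ swap sides when we transpose the identity $S(A) = PAX$. The only point to watch is that $P^T$ remains in ${\rm GL}_n(K)$, which of course it does.
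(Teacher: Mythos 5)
Your proposal is correct and is essentially the paper's own argument: the paper simply asserts Theorem~\ref{thm:Rsing} ``by a left-right dual argument (replacing row spaces with column spaces throughout)'', and your conjugation $S(A)=T(A^T)^T$ is precisely the standard way to make that duality precise, reducing the $\R$ case formally to Theorem~\ref{thm:Lsing} instead of re-running all the lemmas with columns in place of rows. The bookkeeping ($X^T\in M_n(K)$, $P^T\in{\rm GL}_n(K)$, and the sides swapping under transposition) is handled correctly, so nothing further is needed.
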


\begin{remark}
For $K=\mathbb{R}$  every polynomial of odd degree has a root in $\mathbb{R}$. Thus in odd dimensions Theorems \ref{thm:Lsing} (respectively, Theorem \ref{thm:Rsing}) completely describes the linear preservers of $\L$ (respectively, $\R$) on $M_n(\mathbb{R})$. We shall see
below (Example~\ref{petrovic}) that when $n$ is even there are $\L$-preservers on $M_n(\mathbb{R})$ of other kinds.
\end{remark}

\subsection{Some examples}\label{nonbijectiveexamples} Next we provide some examples of non-bijective linear  $\L$-preservers that do not fit the conditions of the above theorems. We shall use the following lemma to construct these examples, and also later to give a general description of $\H$-preservers (Theorem~\ref{hinvert}).

\begin{lemma} \label{ex_mapping_for_nonsingular_matrices}
Let $n \geq 2$ and matrices $C_1, C_2, \dots, C_n \in M_n(K)$ be a basis for a subspace of $M_n(K)$ whose non-zero elements lie in single $\L$-class $\L_X$. Let $T\colon M_n(K)\rightarrow M_n(K)$ be the linear map defined by $T(A) = \sum_{i = 1}^n a_{i1}C_i$ for all $A = (a_{ij}) \in M_n(K)$. Then:
\begin{itemize}
\item[(i)] $T$ preserves $\L$;
\item[(ii)] if $\rk X >1$, then $T$  is not of the form given in (\ref{L_maps_over_alg_cl_fields});
\item[(iii)] if $\rk X =n$, then $T$ also preserves $\H$.
\end{itemize}
\end{lemma}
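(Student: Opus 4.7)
\emph{Plan.} The map $T$ has the convenient property that $T(A)$ depends only on the first column $Ae_1$ of $A$; indeed one checks $T(E_{i1}) = C_i$, and more generally $T(A) = \Phi(Ae_1)$ where $\Phi \colon K^n \to M_n(K)$ is the linear map sending the standard basis vector $e_i$ to $C_i$. Since $C_1,\dots,C_n$ are linearly independent, $\Phi$ is injective, so
\[
T(A) = 0_{n \times n} \iff Ae_1 = 0.
\]
This single observation drives all three parts.

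\textbf{Part (i).} Suppose $A \L B$; then by Proposition~\ref{Okn} (or directly from the definition) there exist $M, N \in M_n(K)$ with $A = MB$ and $B = NA$. Hence $Ae_1 = MBe_1$ and $Be_1 = NAe_1$, so $Ae_1 = 0 \iff Be_1 = 0$, i.e.\ $T(A) = 0 \iff T(B) = 0$. When both images are zero there is nothing to prove; when both are non-zero they are non-trivial elements of the span of $C_1,\dots,C_n$ and therefore, by hypothesis, both lie in $\L_X$, so $T(A) \L T(B)$.

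\textbf{Part (ii).} Suppose for contradiction that $T(A) = PAX$ for some $P \in GL_n(K)$ and $X \in M_n(K)$. Applying this at $A = E_{i1}$ gives $C_i = T(E_{i1}) = PE_{i1}X$, whose rank is at most $\rk E_{i1} = 1$. But each $C_i$ is a non-zero element of the subspace, hence lies in $\L_X$, and so has rank $\rk X > 1$, a contradiction.

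\textbf{Part (iii).} If $\rk X = n$, then $\L_X$ is precisely the set of invertible matrices, which by Proposition~\ref{Okn} is a single $\H$-class. So every non-zero value of $T$ is invertible, and any two non-zero values are $\H$-equivalent. Now suppose $A \H B$; in particular $A \L B$, so the argument in part~(i) gives $T(A) = 0 \iff T(B) = 0$. Either both images vanish, or both are invertible and hence $\H$-related. Thus $T(A) \H T(B)$.

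The only step requiring any thought is verifying that $T(A) = 0 \iff Ae_1 = 0$ is preserved under $\L$, which is where one must use that $\L$-equivalence guarantees a two-sided factorisation on the left (rather than only a one-sided inclusion of row spaces); everything else is immediate from the hypotheses and Proposition~\ref{Okn}.
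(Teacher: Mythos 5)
Your proof is correct and follows essentially the same route as the paper: part (i) rests on the observation that $\L$-related matrices have simultaneously zero or simultaneously non-zero first columns, part (ii) derives a contradiction from the image of a map $A\mapsto PAX'$ necessarily containing a matrix of rank at most $1$, and part (iii) reduces to (i) via $\L_X=\H_X=GL_n(K)$. Your version of (ii), evaluating directly at $E_{i1}$ to get $C_i=PE_{i1}X'$ of rank at most $1$, is a slightly more streamlined form of the paper's construction of a matrix $Y$ with $YX'$ of rank $1$.
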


\begin{proof}
(i) To show that $T$ preserves the $\mathcal{L}$ relation, consider matrices $A = (a_{ij}), B = (b_{ij}) \in M_n(K)$ such that  $A \L B$. Then the conditions $a_{11} = a_{21} = \ldots = a_{n1} = 0$ and $b_{11} = b_{21} = \ldots = b_{n1} = 0$ either both hold or both do not hold. If they both hold, then $T(A) = T(B) = 0_{n \times n}$. Otherwise, $T(A)$ and $T(B)$ are both contained in $\L_X$. In both cases we have $T(A) \mathcal{L} \,T(B)$. Hence, $T$ preserves $\L$.

(ii) Let $\rk X >1$ and suppose for a contradiction that there exist $P \in GL_n(K)$  and $X \in M_n(K)$ such that for all $A \in M_n(K)$, $T(A) = PAX$. Then there exists $Y \in M_n(K)$ such that $YX$ has rank $1$ (for example, if row $i$ of $X$ is non-zero, then the matrix with first row equal to the standard basis element $e_i$ and all other rows equal to zero has this property). But now $T(P^{-1}Y)$ has rank $1$, contradicting the fact that the image of $T$ lies in $\L_X \cup \{0\}$. 

(iii) If $X$ has rank $n$, then  $\L_X = \H_X = GL_n(K)$, and the argument given in part (i) demonstrates that $T$ preserves $\H$.
\end{proof}

Of course, there is an obvious dual statement which yields a construction of maps preserving $\R$ (and where appropriate also $\H$). The following is a very concrete example of the construction in Lemma~\ref{ex_mapping_for_nonsingular_matrices}.

\begin{example}\label{lpreserveexample}
Let $T\colon M_2(\mathbb{R}) \rightarrow M_2(\mathbb{R})$ be the following linear map:
$$T 
\begin{pmatrix}
a & b \\
c & d
\end{pmatrix}
\ = \ 
\begin{pmatrix}
a & a - c\\
c & a + c
\end{pmatrix}
 \ = \  
a \begin{pmatrix}
1 & 1\\
0 & 1
\end{pmatrix} + 
c \begin{pmatrix}
0 & -1\\
1 & 1
\end{pmatrix}.$$

Note that $\det\!\left(
\begin{smallmatrix}
a & a - c \\
c & a + c
\end{smallmatrix}\right) = a^2 + c^2 \geq 0$ with equality if and only if  $a = c = 0$.
Hence, by Lemma~\ref{ex_mapping_for_nonsingular_matrices}, $T$ preserves $\L$ and $\H$ but is not of the form given in (\ref{L_maps_over_alg_cl_fields}).
\end{example}

The following example is based on a construction of Petrovi\'{c} \cite{petrovic}.
\begin{example}\label{petrovic}
Let $K$ be $\mathbb{R}$ (or indeed any ordered field), and let $n$ be even. Consider the matrices:
$$C_{2k-1} = E_{2k-1,1}+E_{2k,2} \textrm{  and  } C_{2k}=E_{2k,1}-E_{2k-1,2}$$
for $k=1, \ldots, \frac{n}{2}$ where $E_{i,j}$ denotes the element of $M_n(K)$ with $1$ in position $(i,j)$ and zeros elsewhere.

It can be directly verified that these matrices are linearly independent and generate a subspace $V$ in which all non-zero matrices have rank $2$; see
 \cite[Proposition 1]{petrovic} in the case $K = \mathbb{R}$ and the proof given there goes through unchanged for any ordered field (since
ordered fields have the property that $x^2+y^2=0$ only if $x=y=0$).
Moreover, all the matrices $C_i$, and hence all matrices in $V$, have row spaces contained in the span of the row vectors $e_1$ and $e_2$.
Since the non-zero matrices in $V$ have rank $2$, their row space must be equal to this span, so they must all be $\L$-related. Thus, the
map $T: M_n(K) \rightarrow M_n(K)$ given by
	$$T(A) \ =
	\ \sum_{i = 1}^n a_{i1}C_i \ = \ 	
	\left(\begin{array}{ccccc}
	a_{1,1}& -a_{2,1} & 0&\cdots & 0\\
	a_{2,1} &a_{1,1} & 0 & \cdots & 0\\
	a_{3,1} & -a_{4,1} & 0&\cdots & 0\\
	a_{4,1} & a_{3,1} & 0&\cdots & 0\\
	\vdots &\vdots & \vdots & \vdots & \vdots\\
	a_{2m-1,1} &-a_{2m,1} & 0 & \cdots & 0\\
	a_{2m,1} &a_{2m-1,1} & 0 & \cdots & 0\\
	\end{array}\right)$$
for all $A = (a_{ij}) \in M_n(K)$,
satisfies the conditions of Lemma~\ref{ex_mapping_for_nonsingular_matrices}
and hence is an $\L$-preserver not of the form obtained in Theorem \ref{thm:Lsing}. 
\end{example}

The following class of examples was considered by Botta \cite[Theorem 3 and proof thereof, on page 48]{Botta}.

\begin{example} \label{ex_irreducible}
Suppose that there exists an irreducible polynomial $f(x) \in K[x]$ of degree $n \geq 2$. Let $C$ be any matrix in $M_n(K)$ whose minimal polynomial is $f(x)$. (For example, one could take $C$ to be the companion matrix of $f(x)$). Then $I, C, C^2, \dots, C^{n-1}$ satisfy
$$\det(\lambda_1I + \lambda_2C + \lambda_3C^2 + \dots + \lambda_nC^{n-1}) = 0 \iff \lambda_1 = \lambda_2 = \ldots = \lambda_n = 0,$$
and hence by Lemma \ref{ex_mapping_for_nonsingular_matrices} the linear transformation $T$ defined as $T(A) = \sum_{i = 1}^n a_{i1}C^{i-1}$, where $A = (a_{ij}) \in M_n(K)$, preserves $\L$ and $\H$, and $T$ is not of the form obtained in Theorem \ref{thm:Lsing}.
\end{example}

\begin{corollary}
Let $K={\mathbb Q}$ be the field of rationals. Then for each $n\ge 2$  there exist non-bijective linear $\L$ and $\H$-preservers that do not fit the conditions of  Theorem \ref{thm:Lsing}.
\end{corollary}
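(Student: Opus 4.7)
The plan is to invoke Example~\ref{ex_irreducible} directly: it suffices to exhibit, for each $n \geq 2$, an irreducible polynomial of degree $n$ over $\mathbb{Q}$. The classical choice is the Eisenstein polynomial $f(x) = x^n - 2 \in \mathbb{Q}[x]$, which is irreducible by Eisenstein's criterion applied at the prime $p = 2$ (the leading coefficient $1$ is not divisible by $2$, every other coefficient is $0$ and hence divisible by $2$, and the constant term $-2$ is not divisible by $2^2 = 4$). Let $C \in M_n(\mathbb{Q})$ be the companion matrix of $f(x)$, so that the minimal polynomial of $C$ is $f(x)$ and in particular $I, C, C^2, \ldots, C^{n-1}$ are linearly independent over $\mathbb{Q}$.

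Next I would appeal to Example~\ref{ex_irreducible} verbatim to obtain the linear map $T\colon M_n(\mathbb{Q}) \to M_n(\mathbb{Q})$ defined by $T(A) = \sum_{i=1}^n a_{i1} C^{i-1}$, and to conclude that $T$ preserves both $\L$ and $\H$. The non-bijectivity is immediate since, by construction, the image of $T$ is contained in the $n$-dimensional subspace spanned by $I, C, \ldots, C^{n-1}$, whereas $M_n(\mathbb{Q})$ has dimension $n^2 > n$ for $n \geq 2$.

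Finally I would note that $T$ does not have the form $A \mapsto PAX$ with $P \in {\rm GL}_n(\mathbb{Q})$: since $f$ is irreducible of degree $n \geq 2$, the matrix $C$ has no nonzero proper invariant subspaces, so every nonzero element of the span of $\{I, C, \ldots, C^{n-1}\}$ is invertible; in particular every element in the image of $T$ has rank either $0$ or $n$. This falls under clause (iii) of Lemma~\ref{ex_mapping_for_nonsingular_matrices} (with $\rk X = n$), so the non-representability in the form of Theorem~\ref{thm:Lsing} follows from part (ii) of that lemma. The only step requiring any substance is verifying the existence of an irreducible polynomial of degree $n$ over $\mathbb{Q}$ for every $n \geq 2$, and this is handled cleanly and uniformly by Eisenstein applied to $x^n - 2$; all other components of the argument are immediate consequences of results already established in the excerpt.
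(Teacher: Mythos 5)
Your proposal is correct and follows exactly the route the paper intends: the corollary is stated as an immediate consequence of Example~\ref{ex_irreducible}, with the only content being the existence of an irreducible polynomial of each degree $n\ge 2$ over $\mathbb{Q}$, which you supply via Eisenstein applied to $x^n-2$. The additional details you check (non-bijectivity from the $n$-dimensional image, and non-representability via Lemma~\ref{ex_mapping_for_nonsingular_matrices}(ii) since $\rk X = n > 1$) are all already contained in the cited example, so nothing is missing.
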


\section{The $\H$ relation}

In this section we consider linear maps on $M_n(K)$ preserving the $\H$-relation. A linear map on $M_n(K)$ is called an \textit{invertibility preserver}
(or by some authors, a \textit{non-singularity preserver}) if it preserves the set of invertible matrices. Our main result is that non-zero $\H$-preservers coincide exactly with invertibility preservers. When the field is algebraically closed, it is well known that the only invertibility preservers are the maps of Lemma \ref{transformations}. Over an arbitrary field, linear maps preserving invertibility have been fully described by de Seguins Pazzis \cite{dpazzis} in the following theorem:

\begin{theorem}\cite[Theorem 2]{dpazzis} \label{pazzisMain} 
Let $n \geq 2$, $K$ be any field, and $T\colon M_n(K) \rightarrow M_n(K)$ be a linear non-singularity preserver. Then:

(i) either $T$ is bijective and then there exist $P, Q \in GL_n(K)$ such that $T(A) = PAQ$ for all $A \in M_n(K)$ or $T(A) = PA^TQ$ for all $A \in M_n(K)$;

(ii) or there exist an $n$-dimensional subspace $V$ of $M_n(K)$ contained in $GL_n(K) \cup \lbrace 0_{n \times n} \rbrace$, an isomorphism $\alpha\colon K^n \to V$, and a non-zero $x \in K^n$ such that
$$
T(M) = \alpha(Mx^T) \ \ \forall M \in M_n(K) \ \ \textrm{ or } \ \ T(M) = \alpha(M^T x^T) \ \ \forall M \in M_n(K).
$$
\end{theorem}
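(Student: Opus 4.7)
The plan is to argue by cases on whether $T$ is bijective, drawing on classical preserver theory in the bijective case and on the structure theory of subspaces of singular matrices in the non-bijective case. A useful common observation is that $\ker T$ lies entirely within the singular matrices: if $A\in GL_n(K)$ had $T(A)=0_{n\times n}$, then $T$ would send the non-singular $A$ to the singular matrix $0_{n\times n}$, contradicting non-singularity preservation.

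In the bijective case the goal is to reduce to Theorem~\ref{rank-1-preservers} by showing that $T$ preserves the rank~$1$ matrices. A bijection preserving $GL_n(K)$ automatically preserves its complement, the singular matrices. Following the classical scheme of Dieudonn\'{e}, one then characterises the rank at most $r$ locus intrinsically in terms of maximal affine subspaces contained in the singular locus, and these characterisations transfer through $T$; this yields rank preservation, and in particular preservation of rank~$1$. Theorem~\ref{rank-1-preservers} immediately delivers $T(A)=PAQ$ or $T(A)=PA^TQ$, which is case~(i). Over an arbitrary field some of Dieudonn\'{e}'s original pencil-style arguments must be replaced by more combinatorial linear-algebraic substitutes so that no appeal to algebraic closure is needed.

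In the non-bijective case the plan is to identify $\ker T$ as either $\{A:Ax^T=0\}$ or $\{A:A^Tx^T=0\}$ for some non-zero $x\in K^n$, and to show that $\mathrm{im}\,T$ is an $n$-dimensional subspace of $GL_n(K)\cup\{0_{n\times n}\}$. Two structural inputs are essential. The first is a classical theorem of Flanders, which bounds the dimension of any linear subspace of $M_n(K)$ consisting of singular matrices by $n(n-1)$ and identifies the extremal examples precisely as the annihilators of a fixed non-zero vector. The second is a fibre analysis of $T$: if $T(A)$ were non-zero and singular, then every scalar multiple $tT(A)$ would also be singular and lie in $\mathrm{im}\,T$, so the parametric family $tA+\ker T$ of corresponding fibres would lie in the singular locus by non-singularity preservation, producing a linear subspace of singular matrices of dimension $\dim \ker T + 1$ to which Flanders' bound applies. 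Iterating and combining with dimension counting (and with Flanders' complementary bound of $n$ on subspaces meeting $GL_n(K)\cup\{0_{n\times n}\}$ only at zero) pins down $\dim \ker T=n(n-1)$, identifies $\ker T$ via the Flanders classification, and forces $\mathrm{im}\,T$ to meet the singular matrices only at $0_{n\times n}$. The canonical isomorphism $M_n(K)/\ker T\cong K^n$ (via $M+\ker T\mapsto Mx^T$ or $M^Tx^T$) composed with the inverse of the restriction of $T$ to its image yields the isomorphism $\alpha\colon K^n\to V:=\mathrm{im}\,T$ of case~(ii).

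The main obstacle is carrying out both the rank-preservation argument (bijective case) and the fibre analysis (non-bijective case) uniformly over all fields $K$, particularly those in which polynomials may lack roots. Pencil arguments that run smoothly over algebraically closed fields can fail badly elsewhere, which is exactly the phenomenon illustrated by the $\L$-preserver examples given earlier in this paper using division algebras over $\mathbb{R}$ and $\mathbb{Q}$. The technical contribution of de Seguins Pazzis is to replace these pencil arguments by refined rank-drop and fibre-structure techniques valid over any field, and this is where the substantial new work beyond the classical algebraically closed case lies.
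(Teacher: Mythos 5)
First, a point of order: the paper does not prove this statement. Theorem~\ref{pazzisMain} is quoted verbatim from de Seguins Pazzis \cite{dpazzis} and used here purely as a black box (the only related original content in this paper is Lemma~\ref{non-singular-maps}, which adapts a single dimension-counting trick from that source). So there is no in-paper proof to compare yours against, and what follows assesses your sketch on its own terms.

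As a proof, your outline has genuine gaps at exactly the two places where the real work lies. In the bijective case, the opening claim that ``a bijection preserving $GL_n(K)$ automatically preserves its complement'' is not automatic: from $T(GL_n(K)) \subseteq GL_n(K)$ and bijectivity of $T$ one cannot conclude $T(GL_n(K)) = GL_n(K)$ over an infinite field, since an injection of an infinite set into itself need not be onto. That $T$ also preserves singularity is essentially Botta's theorem, which this paper recovers as a nontrivial \emph{corollary} of the classification, not as a starting observation; a Dieudonn\'{e}-style argument that characterises the rank-$\leq r$ loci via maximal subspaces of the singular locus therefore cannot be ``transferred through $T$'' until this point is settled. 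In the non-bijective case, the fibre analysis as you state it only shows that if $\mathrm{im}(T)$ contains a non-zero singular matrix $T(A)$, then $KA + \ker T$ is a linear subspace of singular matrices, giving $\dim\ker T \leq n^2-n-1$; this does not force $\mathrm{im}(T)$ to meet the singular matrices only at $0_{n\times n}$, and the decisive step is hidden inside ``iterating and combining with dimension counting.'' The argument that actually closes this (and which Lemma~\ref{non-singular-maps} imitates) is to pull back an \emph{arbitrary} maximal singular subspace $V$ of the target: $T^{-1}(V)$ consists of singular matrices and has dimension at least $\dim\ker T + \bigl(\dim\mathrm{im}(T) + \dim V - n^2\bigr) = n^2-n$, forcing equality and then invoking the classification of extremal singular subspaces. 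Finally, a caution on attribution: the bound $n^2-n$ together with the identification of the extremal subspaces as $\{M : Mx^T=0\}$ or $\{M : xM=0\}$ goes back to Dieudonn\'{e} and appears as \cite[Theorem 4]{dpazzis}; Flanders' bounded-rank theorem classically carries a cardinality hypothesis on $K$, so relying on it verbatim would undercut the ``any field'' claim you need.
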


 Our strategy is to prove directly that non-zero $\H$-preservers also preserve invertibility, and then use 
 Theorem \ref{pazzisMain} to show that every invertibility preserver also preserves $\H$. First we prove the following lemma.
 
\begin{lemma} \label{non-singular-maps}
 Let $K$ be any field and $R: M_n(K) \rightarrow M_r(K)$ be a linear map such that $R(GL_n(K)) \subseteq GL_r(K)$. Then $n \leq r$.
 \end{lemma}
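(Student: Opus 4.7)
The plan is to derive a contradiction when $n>r$ by producing inside $M_n(K)$ a linear subspace of singular matrices whose dimension exceeds the classical upper bound on such subspaces. I would fix any non-zero vector $w \in K^r$ and consider the linear map
$$\phi_w \colon M_n(K) \to K^r, \qquad \phi_w(A) = R(A) w.$$
Since the codomain is $r$-dimensional, the rank-nullity theorem immediately gives $\dim \ker \phi_w \geq n^2 - r$.

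Next I would note that if $A \in GL_n(K)$, then by hypothesis $R(A) \in GL_r(K)$, so $R(A) w \neq 0$ because $w \neq 0$. Hence $\ker \phi_w \cap GL_n(K) = \emptyset$, i.e.\ $\ker \phi_w$ is a linear subspace of $M_n(K)$ in which every element is singular.

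To close the argument I would invoke the classical theorem of Dieudonn\'e: any linear subspace of $M_n(K)$ consisting only of singular matrices has dimension at most $n^2-n$ (the bound being sharp, attained for instance by $\{A \in M_n(K) : A v = 0\}$ for any fixed non-zero $v \in K^n$). Combining this with the lower bound above gives $n^2 - r \leq n^2 - n$, that is, $n \leq r$, as required.

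The principal obstacle is the appeal to Dieudonn\'e's bound on singular subspaces, a non-trivial classical result not developed earlier in the excerpt; everything else is a one-line dimension count together with the almost tautological observation that an invertible image cannot annihilate a non-zero vector. If one wished to avoid quoting the general theorem, it would suffice to establish just the implication that a subspace of $M_n(K)$ of dimension greater than $n^2-n$ must contain an invertible matrix, which could be inserted at this point as a short lemma.
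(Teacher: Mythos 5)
Your proof is correct and follows essentially the same route as the paper: both arguments pull back a maximal $(r^2-r)$-dimensional singular subspace of $M_r(K)$ through $R$ and compare against the classical $n^2-n$ bound on singular subspaces of $M_n(K)$, which the paper quotes as \cite[Theorem 4(a)]{dpazzis}. Your packaging via the composite $\phi_w(A)=R(A)w$ obtains $\dim\ker\phi_w\ge n^2-r$ in a single application of rank--nullity, a slightly cleaner presentation of the paper's two-step dimension count, but the mathematical content is identical.
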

 
 \begin{proof}
 Let $k$ be such that $r^2-k$ is the dimension of the image of $R$ (that is, the rank of $R$). Then, by the rank-nullity theorem, the kernel of $R$ has dimension $n^2-r^2+k$. Let $V$ be an $(r^2-r)$-dimensional subspace of $M_r(K)$ consisting entirely of singular matrices (for example, the set of matrices with first row equal to $0$). Since $\Im(R)$ has codimension $k$ in $M_r(K)$ and $V$ has dimension $r^2-r$, the intersection $\Im(R) \cap V$ has dimension at least $r^2-r-k$.

Consider the subspace $U=R^{-1}(\Im(R) \cap V)$ of $M_n(K)$. The image $R(U)$ is contained in $V$, so it consists of singular matrices. Since $R(GL_n(K)) \subseteq GL_r(K)$, the space $U$ must therefore consist of singular matrices. The greatest dimension of a subspace of singular matrices in $M_n(K)$ is $n^2-n$ (by for example \cite[Theorem 4(a)]{dpazzis}), so we must have ${\rm dim}(U) \leq n^2 - n$. On the other hand, since $U$ is defined as the preimage under $R$ of a subspace of dimension at least $r^2-r-k$, it contains the kernel of $R$ (which has dimension $n^2 - r^2+k$) and has image with dimension at least $r^2 - r -k$, so the rank-nullity theorem applied to the restriction of $R$ to $U$ gives ${\rm dim}(U) \geq (n^2-r^2+k) +(r^2-r-k) = n^2-r$. Thus we find that $n^2-n \geq {\rm dim}(U) \geq n^2-r$, giving $n \leq r$.
 \end{proof}

\begin{theorem}\label{hinvert}
\label{preservesnonsing}
Let $K$ be any field and $T: M_n(K) \rightarrow M_n(K)$ be a linear map. Then $T$ preserves $\H$ if and only if either $T=0$ or $T$ preserves invertibility.
\end{theorem}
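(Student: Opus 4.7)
The plan is to treat the two implications separately, leaning on Theorem~\ref{pazzisMain} for the ``if'' direction and on Lemma~\ref{non-singular-maps} for the ``only if'' direction. For the ``if'' direction, the zero map trivially preserves $\H$, so I may suppose $T$ is a non-zero invertibility preserver and invoke Theorem~\ref{pazzisMain}. In case (i) of that theorem, $T$ has the form $A \mapsto PAQ$ or $A \mapsto PA^T Q$ and preserves $\H$ by Lemma~\ref{transformations}. In case (ii), $T(M) = \alpha(Mx^T)$ (or $T(M) = \alpha(M^T x^T)$) with image lying inside $V \subseteq GL_n(K) \cup \{0_{n \times n}\}$; the key observation here is that $T(A) = 0_{n \times n}$ if and only if $Ax^T = 0$, which depends only on the right null space of $A$, hence only on its row space. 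Thus already $A \L B$ forces $T(A) = 0_{n \times n} \iff T(B) = 0_{n \times n}$, while any two non-zero elements of $V$ lie in the single $\H$-class $GL_n(K)$; together these yield $A \H B \implies T(A) \H T(B)$. The transpose variant is identical upon swapping rows for columns and $\L$ for $\R$.

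For the ``only if'' direction, assume $T \neq 0$ preserves $\H$. Since $GL_n(K)$ is a single $\H$-class (Proposition~\ref{Okn}), $T(GL_n(K))$ lies in a single $\H$-class (possibly $\{0_{n \times n}\}$), so the task reduces to showing $T(I) \in GL_n(K)$. I would first record the elementary fact that $GL_n(K)$ spans $M_n(K)$ as a $K$-vector space over any field: off-diagonal units satisfy $E_{ij} = (I + E_{ij}) - I$, and each diagonal unit $E_{ii}$ can be written as a linear combination of invertibles via $(I + \lambda E_{ii}) - I$ when $|K| \geq 3$, and via an explicit expression as a sum of two invertibles over $\mathbb{F}_2$. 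If $T(I) = 0_{n \times n}$, then $\H$-preservation forces $T$ to vanish on all of $GL_n(K)$, and by the spanning property we get $T = 0$, contradicting $T \neq 0$.

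The main obstacle is to rule out the remaining case, in which $T(I)$ is non-zero but not invertible; this is exactly where Lemma~\ref{non-singular-maps} is decisive. Suppose for contradiction that $T(I)$ has rank $r$ with $0 < r < n$. Factor $T(I) = C_0 R_0$ with $C_0 \in M_{n \times r}(K)$ and $R_0 \in M_{r \times n}(K)$ both of rank $r$; then $\H_{T(I)} = \{C_0 M R_0 : M \in GL_r(K)\}$ sits inside the $r^2$-dimensional subspace $C_0 M_r(K) R_0$ of $M_n(K)$. Since $T(GL_n(K)) \subseteq \H_{T(I)}$ and $GL_n(K)$ spans $M_n(K)$, the whole image of $T$ lies inside $C_0 M_r(K) R_0$. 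Choosing one-sided inverses $C_0^+ \in M_{r \times n}(K)$ and $R_0^+ \in M_{n \times r}(K)$ with $C_0^+ C_0 = R_0 R_0^+ = I_r$, the formula $\phi(A) := C_0^+ T(A) R_0^+$ defines a linear map $\phi : M_n(K) \to M_r(K)$ satisfying $T(A) = C_0 \phi(A) R_0$ and carrying $GL_n(K)$ into $GL_r(K)$. Lemma~\ref{non-singular-maps} then forces $n \leq r$, contradicting $r < n$. Hence $T(I) \in GL_n(K)$, so every $T(A)$ with $A \in GL_n(K)$ is $\H$-related to $T(I)$ and thus invertible, i.e., $T$ preserves invertibility.
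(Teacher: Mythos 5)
Your proof is correct and follows essentially the same route as the paper: the forward direction reduces, via the fact that $GL_n(K)$ is a single $\H$-class together with a rank factorisation of $T(I)$, to the dimension bound of Lemma~\ref{non-singular-maps}, and the converse invokes Theorem~\ref{pazzisMain}. The only cosmetic differences are that you make the spanning of $M_n(K)$ by $GL_n(K)$ explicit (the paper uses it tacitly), you normalise $\H_{T(I)}$ with one-sided inverses of the factors rather than conjugating $T(I)$ to a partial identity, and you treat the non-bijective case of the converse by a direct observation about null spaces along an $\L$- or $\R$-class instead of the paper's decomposition through Lemma~\ref{ex_mapping_for_nonsingular_matrices}.
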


\begin{proof}
For the direct implication, suppose for a contradiction that $T$ preserves $\H$, but is non-zero and does not preserve invertibility. Since
$GL_n(K)$ is a single $\H$-class we have $T(GL_n(K)) \subseteq \H_X$ for some matrix $X$ of rank $r$, where $0 < r < n$ because $T$ is non-zero and does not preserve
invertibility. Without loss of generality, we may assume that $X$ is equal to the partial identity $I_n(r):={\rm diag}(\underbrace{1, \dots, 1}_r, \underbrace{0, \dots, 0}_{n - r})$. Indeed, if not, then there exist $P, Q \in GL_n(K)$ such that $PXQ=I_n(r)$,
and using Lemma \ref{transformations}(i) we may replace $T$ with the map $M_n(K) \rightarrow M_n(K), A \mapsto PT(A)Q$.

It is easy to see that $\H_{I_n(r)}$ consists of the set of $n \times n$ matrices with an $r \times r$ invertible matrix in the top left corner, and zeros elsewhere. Thus $T$ determines a linear map $R: M_n(K) \rightarrow M_r(K)$ with $R(GL_n(K)) \subseteq GL_r(K)$. By Lemma \ref{non-singular-maps}, $n \leq r$, which is a contradiction.

For the converse implication, we use the results of de Seguins Pazzis \cite{dpazzis}. It is clear that the zero map preserves $\H$.
By Theorem \ref{pazzisMain}(i) the bijective invertibility preservers are the standard maps shown in Lemma \ref{transformations} to be $\H$-preservers. By Theorem \ref{pazzisMain}(ii) the non-bijective invertibility preservers have one of the following forms:

\begin{eqnarray}
T(M) &=& \alpha(Mx^T) \ \ \forall M \in M_n(K) \ \textrm{ or } \ \label{pazzisEq1}\ \\ T(M) &=& \alpha(M^T x^T) \ \ \forall M \in M_n(K), \label{pazzisEq2}
\end{eqnarray}
where $V$ is an $n$-dimensional subspace of $M_n(K)$ contained in $GL_n(K) \cup \lbrace 0_{n \times n} \rbrace$, $x$ is a non-zero element of $K^n$ and
$\alpha : K^n \to V$ is an isomorphism. If we set $C_i = \alpha(e_i)$ for each $i$, then the $C_i$s form a basis for $V$ 
(and thus satisfy the conditions in Lemma~\ref{ex_mapping_for_nonsingular_matrices} above, recalling that $GL_n(K)$ is an $\L$-class of $M_n(K)$) and equations \eqref{pazzisEq1} and \eqref{pazzisEq2} may be rewritten as:
\begin{eqnarray*}
T(M)&=& \sum_{i = 1}^n (Mx^T)_iC_i \; \; \forall M \in M_n(K) \;\;\mbox{ or}\\  T(M) &=& \sum_{i = 1}^n (M^T x^T)_iC_i \; \;\ \forall M \in M_n(K).
\end{eqnarray*}
Let $P$ be any invertible matrix such that $Pe_1^T=x^T$. In the first case, $T$ decomposes as the composition of the $\H$-preserving (by Lemma \ref{transformations}) map $M \mapsto MP$, with the $\H$-preserving (by Lemma \ref{ex_mapping_for_nonsingular_matrices}) map  $f\colon A \mapsto \sum_{i = 1}^n (A e_1^T)_iC_i = \sum_{i = 1}^n a_{i,1}C_i $, where $A=(a_{ij}) \in M_n(K)$. The second case is similar, with $T$ decomposing as the composition of the $\H$-preserver $M \mapsto M^T P$, and map $f$ from the first case. Thus in all cases $T$ preserves $\H$.
\end{proof}

A precise description of the maps which preserve invertibility, and therefore of the non-zero $\H$-preservers, is given in Theorem \ref{pazzisMain}, which is \cite[Theorem 2]{dpazzis}.

In the case of bijective maps, we once again obtain a statement which mirrors a result in the anti-negative semifield case  \cite{GutermanJohnsonKambites}, and therefore applies to semifields in general.

\begin{theorem}\label{bijectiveh}
Let $\S$ be a semifield and $T\colon M_n(\S) \rightarrow M_n(\S)$ be a bijective linear map. Then the following are equivalent:
\begin{itemize}
\item[(i)] $T$ preserves $\mathcal{H}$;
\item[(ii)] $T$ preserves $\leq_\mathcal{H}$;
\item[(iii)] there exist $P, Q \in GL_n(\S)$ such that either $T(A) = PAQ$ for all $A \in M_n(\S)$ or $T(A) = PA^TQ$ for all $A \in M_n(\S)$.
\end{itemize}
\end{theorem}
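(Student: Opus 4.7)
The plan is to reduce to known results rather than develop any new machinery. By Proposition~\ref{dichotomy}, any semifield is either a field or anti-negative. The anti-negative case is already handled in \cite{GutermanJohnsonKambites}, so I would immediately reduce to the case $\S = K$ a field and work entirely in $M_n(K)$ from that point on.

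Next I would dispatch the easy implications. The implications (iii) $\Rightarrow$ (i) and (iii) $\Rightarrow$ (ii) are exactly what Lemma~\ref{transformations} provides: both maps of the form $A \mapsto PAQ$ and $A \mapsto PA^T Q$ with $P, Q \in GL_n(K)$ preserve $\H$ and $\leq_\H$. For (ii) $\Rightarrow$ (i), if $T$ preserves $\leq_\H$ and $A \H B$ then $A \leq_\H B$ and $B \leq_\H A$, so $T(A) \leq_\H T(B)$ and $T(B) \leq_\H T(A)$, giving $T(A) \H T(B)$.

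The substantive step is (i) $\Rightarrow$ (iii). Here I would use two already-proved pieces. Because $T$ is bijective it is in particular non-zero, so Theorem~\ref{hinvert} applies and $T$ is an invertibility preserver. Then Theorem~\ref{pazzisMain} gives a complete dichotomy for linear invertibility preservers on $M_n(K)$: either $T$ has the form in (iii), or $T$ factors through an $n$-dimensional subspace of $M_n(K)$ via an isomorphism $\alpha\colon K^n \to V$. The second possibility is ruled out by bijectivity, since then the image of $T$ would have dimension $n$ rather than $n^2$ (assuming $n \geq 2$; the case $n = 1$ is trivial since $\H$ is then the universal relation and every nonzero scalar multiplication is of the required form). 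Hence $T(A) = PAQ$ for all $A$ or $T(A) = PA^T Q$ for all $A$, as required.

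I do not anticipate any genuine obstacle in this proof, since the hard analytic work has already been done: Theorem~\ref{hinvert} does the translation from $\H$-preservation to invertibility-preservation, and Theorem~\ref{pazzisMain} of de Seguins Pazzis supplies the classification of invertibility preservers. The only small thing to check carefully is the dimension argument ruling out case (ii) of Theorem~\ref{pazzisMain}, which is immediate once one writes it down.
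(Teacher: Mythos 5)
Your proposal is correct and follows essentially the same route as the paper: reduce to the field case via Proposition~\ref{dichotomy}, dispose of (iii)$\Rightarrow$(ii)$\Rightarrow$(i) via Lemma~\ref{transformations} and the definitions, and obtain (i)$\Rightarrow$(iii) by combining Theorem~\ref{hinvert} with Theorem~\ref{pazzisMain}, using bijectivity to exclude the non-bijective case of the latter. Your explicit dimension count ruling out case (ii) of Theorem~\ref{pazzisMain} and your remark on $n=1$ are minor elaborations of what the paper leaves implicit.
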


\begin{proof}
The case where $\S$ is an anti-negative semifield follows from \cite[Corollary~4.4]{GutermanJohnsonKambites} so by
Proposition~\ref{dichotomy} we may assume that $\S$ is a field $K$.  That (iii) implies (ii) is Lemma~\ref{transformations}, while that (ii) implies (i)
is immediate from the definition. Finally, if (i) holds, so $T$ preserves $\H$, then by Theorem~\ref{hinvert} $T$ preserves invertibility,
and since $T$ is bijective Theorem \ref{pazzisMain}(i) ensures that it has the form given in (iii).
\end{proof}

Over an algebraically closed field, or more generally in a field with enough roots, we obtain a very simple description of the $\H$-preservers.
\begin{corollary} \label{main_H_Th}
Let $n \in \mathbb{N}$ and let $K$ be a field in which every polynomial of degree $n$ has a root. Let $T: M_n(K) \rightarrow M_n(K)$ be a linear map. Then $T$ preserves $\H$ if and only if one of the following holds:
\begin{itemize}
\item $T=0$;
\item there exist $P, Q \in GL_n(K)$ such that $T(A)=PAQ$ for all $A \in M_n(K)$;
\item there exist $P, Q \in GL_n(K)$ such that $T(A)=PA^TQ$ for all $A \in M_n(K)$.
\end{itemize}
\end{corollary}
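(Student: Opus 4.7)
The plan is to combine Theorem~\ref{hinvert} with the Pazzis classification (Theorem~\ref{pazzisMain}) and then use the root hypothesis on $K$ to rule out the exotic non-bijective case (ii) of that classification. The reverse implication is essentially trivial: the zero map preserves every equivalence relation, while maps of the form $A \mapsto PAQ$ and $A \mapsto PA^T Q$ are handled by Lemma~\ref{transformations}.

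For the forward direction, I assume that $T$ is a non-zero linear $\H$-preserver on $M_n(K)$. By Theorem~\ref{hinvert}, $T$ is an invertibility preserver. The case $n = 1$ is trivial, since every non-zero linear map $K \to K$ has the form $x \mapsto cx$ with $c \in K \setminus \{0\}$, which is already $PAQ$ with $P = c$ and $Q = 1$. So I assume $n \geq 2$ and apply Theorem~\ref{pazzisMain}: either $T$ has one of the two desired matrix-product forms (case (i)) and we are done, or $T$ falls into case (ii), which involves an $n$-dimensional subspace $V \subseteq GL_n(K) \cup \{0_{n \times n}\}$.

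The heart of the proof is to show that no such subspace $V$ exists under our hypothesis on $K$. Fixing any two linearly independent vectors $C_1, C_2 \in V$, I would consider the polynomial
$$p(x) \ := \ \det(C_1 + x C_2) \in K[x].$$
Since $C_1$ and $C_2$ lie in $V \setminus \{0_{n \times n}\}$, both are invertible, so the constant term $\det(C_1)$ and the leading coefficient $\det(C_2)$ are both non-zero; in particular, $p$ has degree exactly $n$. By hypothesis $p$ has a root $\lambda \in K$, which must be non-zero. But then $C_1 + \lambda C_2 \in V$ is a non-zero singular matrix (using linear independence of $C_1, C_2$ together with $\lambda \neq 0$), contradicting $V \subseteq GL_n(K) \cup \{0_{n \times n}\}$. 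Thus case (ii) is impossible, and $T$ must come from case (i), yielding one of the two desired forms.

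I do not anticipate any real obstacles: the argument is essentially a clean packaging of Theorem~\ref{hinvert} and Theorem~\ref{pazzisMain} with a short determinantal-polynomial calculation. The only care required is verifying that both the leading coefficient and the constant term of $p(x)$ are non-zero, so that the root hypothesis on $K$ genuinely applies, and handling the $n = 1$ edge case separately since Theorem~\ref{pazzisMain} is stated only for $n \geq 2$.
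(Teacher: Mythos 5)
Your proof is correct and follows the same overall route as the paper: reduce to invertibility preservers via Theorem~\ref{hinvert}, handle $n=1$ separately, apply Theorem~\ref{pazzisMain}, and rule out the non-bijective case by showing that no $n$-dimensional subspace $V$ of $GL_n(K)\cup\{0_{n\times n}\}$ can exist under the root hypothesis. The only difference is that the paper disposes of such a subspace by citing Lemma~\ref{n_matrices_smaller_rank} applied to a full basis of $V$ (all of whose non-zero elements lie in the single $\L$-class $GL_n(K)$), whereas you inline the underlying determinant-polynomial argument directly with just two linearly independent elements of $V$ --- a slightly more economical instance of the same idea.
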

\begin{proof}
If $n=1$ then every linear map $T : M_1(K) \to M_1(K)$ clearly has one of the required forms, so suppose $n \geq 2$.
If $T$ preserves $\H$ then by Theorem~\ref{hinvert} either $T=0$ or $T$ preserves invertibility. In the latter case, if $T$ is
bijective then it has one of the required forms by Theorem \ref{pazzisMain}(i). If $T$ is not bijective then by Theorem \ref{pazzisMain}(ii) it has the form 
\begin{equation}\label{pazzis}
T(M) = \alpha(Mx^T) \ \ \forall M \in M_n(K)  \ \textrm{ or }  \ T(M) = \alpha(M^T x^T) \ \ \forall M \in M_n(K),
\end{equation}
where $V$ is an $n$-dimensional subspace of $M_n(K)$ contained in $GL_n(K) \cup \lbrace 0_{n \times n} \rbrace$, $x$ is a non-zero element of $K^n$ and
$\alpha : K^n \to V$ is an isomorphism. We prove that such a subspace $V$ cannot exist. Indeed, if it does, choose a basis $\lbrace B_1, \dots, B_n \rbrace$ for $V$. Then by Lemma~\ref{n_matrices_smaller_rank}
some non-trivial combination of the basis elements has rank strictly less than $n$, which contradicts either the linear independence of the
basis or the fact that $V$ is contained in $GL_n(K) \cup \lbrace 0_{n \times n} \rbrace$.

The converse follows from Lemma~\ref{transformations} and the obvious fact that the zero map preserves $\H$.
\end{proof}

As a corollary, we obtain a slight strengthening of a theorem of Botta \cite[Theorem 2]{Botta}, which to the best of our
knowledge has not appeared in the literature before. (The original form requires the slightly stronger hypothesis that the field is algebraically closed.)

\begin{corollary}
Let $n \in \mathbb{N}$ and let $K$ be a field in which every polynomial of degree $n$ has a root. If $T : M_n(K) \to M_n(K)$ is linear and 
preserves invertibility, then $T$ is bijective and also preserves singularity.
\end{corollary}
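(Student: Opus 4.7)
The plan is to deduce this directly from Theorem \ref{pazzisMain} combined with Lemma \ref{n_matrices_smaller_rank}, essentially reusing the mechanism already employed in the proof of Corollary \ref{main_H_Th}.

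First I would dispose of the trivial case $n = 1$: here $M_1(K) \cong K$ and any linear map is multiplication by a scalar, so invertibility preservation forces that scalar to be non-zero. Hence $T$ is bijective, and the only singular $1 \times 1$ matrix is $0 = T(0)$, so singularity is preserved trivially.

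For $n \geq 2$ I would apply Theorem \ref{pazzisMain} to $T$. In case (i) of that theorem, $T$ is bijective of the form $A \mapsto PAQ$ or $A \mapsto PA^TQ$ with $P, Q \in GL_n(K)$; each such map preserves rank (using $\det(PAQ) = \det(P)\det(A)\det(Q)$, and the fact that transposition preserves rank), so in particular both invertibility and singularity are preserved, giving the conclusion. The essential step is therefore to rule out case (ii), which would produce an $n$-dimensional subspace $V \subseteq GL_n(K) \cup \{0_{n \times n}\}$ of $M_n(K)$. Picking any basis $B_1, \ldots, B_n$ of $V$, each $B_i$ has rank exactly $n$, and all of them lie in the unique $\L$-class $GL_n(K)$ (by Proposition \ref{Okn}). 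Applying Lemma \ref{n_matrices_smaller_rank} with $k = n$ — which is precisely where the hypothesis that every polynomial of degree $n$ over $K$ has a root is consumed — some non-trivial linear combination $\sum_{i=1}^n \lambda_i B_i$ has rank strictly less than $n$. Linear independence of the basis makes this combination non-zero, so it lies in $V \setminus \{0_{n \times n}\}$ yet is not invertible, contradicting $V \subseteq GL_n(K) \cup \{0_{n \times n}\}$. Hence case (ii) is impossible, case (i) applies, and $T$ is bijective and singularity-preserving.

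The argument requires no new ideas beyond those already assembled, and I do not foresee any real obstacle: the substantive content has been packaged into Theorem \ref{pazzisMain} and Lemma \ref{n_matrices_smaller_rank}, and the present corollary is best viewed as a compact restatement of what those two results jointly yield.
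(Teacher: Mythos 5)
Your proof is correct and rests on exactly the same machinery as the paper's: Theorem \ref{pazzisMain} together with Lemma \ref{n_matrices_smaller_rank} to rule out the non-bijective case (ii). The only difference is cosmetic --- the paper routes through Theorem \ref{hinvert} and Corollary \ref{main_H_Th}, whose proof contains precisely your argument, whereas you apply Theorem \ref{pazzisMain} to the invertibility preserver directly and skip the detour through the $\H$-relation.
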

\begin{proof}
If $T$ preserves invertibility then clearly it is non-zero and by Theorem~\ref{hinvert} it preserves $\H$, so it has either the
second or the third form given in Corollary~\ref{main_H_Th}. It is easy to see that each of these forms gives a bijective map which
also preserves singularity.
\end{proof}

The situation is more complicated in the case of non-bijective $\H$-preservers over a field which is not algebraically closed. Indeed, as noted in \cite{Semrl}, the representation of complex numbers, quaternions, and octonions, respectively, by matrices over the real numbers provide examples of non-zero linear non-bijective maps on $M_2(\mathbb{R})$, $M_4(\mathbb{R})$ and $M_8(\mathbb{R})$, respectively, that preserve invertibility, and hence, the $\H$ relation. 
By applying the celebrated ``1,2,4,8 Theorem'' of Bott, Kervaire and Milnor \cite{BottMilnor,Kervaire},
de Seguins Pazzis \cite[Proposition 8]{dpazzis} showed that invertibility preservers in $M_n(\mathbb{R})$ are bijective for all $n$ except $2$, $4$ and $8$. Combining with Theorem~\ref{hinvert} we have:
\begin{corollary}\label{cor248}
If $n \notin \lbrace 2, 4, 8 \rbrace$ then every linear $\H$-preserver on $M_n(\mathbb{R})$ is either zero or bijective.
\end{corollary}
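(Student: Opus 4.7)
The plan is to combine Theorem~\ref{hinvert} with the result of de Seguins Pazzis \cite[Proposition 8]{dpazzis} cited in the paragraph immediately preceding the corollary. The argument is a two-step chain: given a non-zero linear $\H$-preserver $T\colon M_n(\mathbb{R}) \to M_n(\mathbb{R})$, I would first apply Theorem~\ref{hinvert} to deduce that $T$ preserves invertibility; then I would invoke \cite[Proposition 8]{dpazzis}, which asserts that every linear invertibility preserver on $M_n(\mathbb{R})$ is bijective whenever $n \notin \{2,4,8\}$, to conclude that $T$ itself is bijective.

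There is essentially no obstacle at this stage because all the substantive work has been done elsewhere. The passage from $\H$-preservation to invertibility preservation rests on Lemma~\ref{non-singular-maps} and the dimension-counting argument in Theorem~\ref{hinvert} ruling out the possibility that $T(GL_n(\mathbb{R}))$ sits inside a proper $\H$-class of singular matrices. The bijectivity half eliminates the non-bijective possibility (ii) of Theorem~\ref{pazzisMain} by showing that an $n$-dimensional subspace of $M_n(\mathbb{R})$ contained in $GL_n(\mathbb{R}) \cup \{0_{n\times n}\}$ cannot exist unless $n \in \{1,2,4,8\}$, which is precisely the content of the Bott--Milnor--Kervaire ``$1,2,4,8$'' theorem on the dimensions in which $\mathbb{R}^n$ carries a real division algebra structure. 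Since $n=1$ trivially forces any non-zero linear map on $M_1(\mathbb{R}) \cong \mathbb{R}$ to be bijective, only the genuinely exceptional dimensions $2,4,8$ survive as potential obstructions, matching the statement of the corollary.
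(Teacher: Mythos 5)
Your proof is correct and follows exactly the paper's route: the paper also derives the corollary by combining Theorem~\ref{hinvert} (non-zero $\H$-preservers preserve invertibility) with \cite[Proposition 8]{dpazzis} (invertibility preservers on $M_n(\mathbb{R})$ are bijective for $n \notin \{2,4,8\}$). Your additional remarks on the role of the Bott--Milnor--Kervaire theorem match the paper's surrounding discussion.
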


\section{The $\J$ relation}
Two elements of $M_n(K)$ are $\J$-related precisely if they have the same rank and the $\mathcal{J}$-order corresponds exactly to the natural order on ranks (see \cite[Lemma 2.1]{Okninsk}). For $r=0,\ldots, n$ we denote the set of matrices of rank $r$ by~$\J_r$.
\begin{lemma} \label{sum_of_2}
Let $K$ be a field and $n$ a natural number.
\begin{itemize}
\item[(i)] For $0 \leq r \leq k \leq n$,  every rank $r$ matrix in $M_n(K)$ can be written as a sum of two rank $k$ matrices.
\item[(ii)] For $3 \leq r \leq n$, every rank $r$ matrix in $M_n(K)$ can be written as a sum of two rank $r-1$ matrices.
\end{itemize}
\end{lemma}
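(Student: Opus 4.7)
The plan is to reduce both parts to the case $A = I_n(r) := {\rm diag}(\underbrace{1, \ldots, 1}_{r}, 0, \ldots, 0)$. Indeed, any rank $r$ matrix has the form $A = P I_n(r) Q$ for some $P, Q \in GL_n(K)$, and conjugating a decomposition $I_n(r) = B + C$ by $P$ and $Q$ preserves the rank of each summand. It therefore suffices to decompose $I_n(r)$.

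For part (i), assuming $|K| \geq 3$, fix a scalar $a \in K \setminus \{0, 1\}$ and define
$$B = {\rm diag}(\underbrace{a, \ldots, a}_{r}, \underbrace{1, \ldots, 1}_{k-r}, 0, \ldots, 0), \qquad C = I_n(r) - B.$$
Then $C = {\rm diag}(\underbrace{1-a, \ldots, 1-a}_{r}, \underbrace{-1, \ldots, -1}_{k-r}, 0, \ldots, 0)$, and both $B$ and $C$ are diagonal matrices with exactly $k$ nonzero entries, hence both of rank $k$. Over $\F_2$ no such scalar $a$ exists, so a non-diagonal construction is needed: one can use block constructions involving invertible $r \times r$ matrices whose characteristic polynomial avoids $1$ as a root (which exist for $r \geq 2$), combined with ad-hoc arguments for the remaining small cases.

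For part (ii), I will produce an explicit decomposition $I_r = \tilde B + \tilde C$ in $M_r(K)$ with $\rk \tilde B = \rk \tilde C = r - 1$; padding with a zero block of size $(n-r)\times(n-r)$ then gives a decomposition of $I_n(r)$ in $M_n(K)$. Define $\tilde B$ as the ``shift with self-loop'' satisfying $\tilde B e_i = e_{i+1}$ for $1 \leq i \leq r - 1$ and $\tilde B e_r = e_r$. Its columns are $e_2, e_3, \ldots, e_r, e_r$, spanning $\langle e_2, \ldots, e_r \rangle$, so $\rk \tilde B = r - 1$. The columns of $I_r - \tilde B$ are $e_1 - e_2, e_2 - e_3, \ldots, e_{r-1} - e_r, 0$; the first $r - 1$ are linearly independent by a telescoping argument (any dependency would force the coefficient of each $e_i$ in turn to vanish), so $\rk(I_r - \tilde B) = r - 1$ as well.

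The main conceptual step is guessing the right $\tilde B$ in part (ii): one needs a rank $r - 1$ matrix that also has $1$ as an eigenvalue with one-dimensional eigenspace, so that $I_r - \tilde B$ is likewise of rank $r - 1$. The shift-with-self-loop matrix accomplishes this uniformly over any field, bypassing potential characteristic-$2$ obstructions. Part (i) is comparatively mechanical, the only genuinely separate work being the treatment of $K = \F_2$.
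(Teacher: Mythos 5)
Your reduction to $I_n(r)$ and your treatment of part (i) for $|K|\geq 3$ coincide with the paper's. Part (ii), however, you prove by a genuinely different and cleaner route: the ``shift with self-loop'' matrix $\tilde B=\sum_{i=1}^{r-1}E_{i+1,i}+E_{r,r}$ yields a single decomposition $I_r=\tilde B+(I_r-\tilde B)$ into two rank-$(r-1)$ summands that works over \emph{every} field (indeed for every $r\geq 2$), and both rank computations check out exactly as you state. The paper instead splits part (ii) into two cases: a diagonal trick for $|K|>2$, and over $\F_2$ a block construction assembled from two explicit $3\times 3$ matrices plus a recursive appeal to part (i). Your construction removes that case analysis entirely, which is a real simplification.

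Part (i) over $\F_2$ is where you have a genuine gap. Your companion-matrix idea does handle $2\leq r\leq k$: take $M\in GL_r(\F_2)$ with $I_r+M$ also invertible (e.g.\ the companion matrix of $x^r+x+1$) and set $B=\mathrm{diag}(M,N,0)$, $C=\mathrm{diag}(I_r+M,N,0)$ with $N\in GL_{k-r}(\F_2)$. But $r=1$ is not a ``remaining small case'': it is an infinite family (all $1\leq k\leq n$), and it is precisely where your mechanism breaks, since over $\F_2$ the scalar $1$ is not a sum of two nonzero scalars, so no admissible $1\times 1$ block $M$ exists. One genuinely needs a non-block-diagonal construction there; the paper uses the pair $I_n(r)+\sum_{i=1}^{k}E_{i,i+1}$ and $\sum_{i=1}^{k}E_{i,i+1}$ for $k<n$, with a separate formula for $k=n$. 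Until you write such a construction down, part (i) remains unproved over $\F_2$. (A caveat you share with the paper: for $n=1$, $r=k=1$, $K=\F_2$ the assertion is actually false, since $1+1=0$, so any completion of the ``ad-hoc'' cases must exclude that degenerate instance.)
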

\begin{proof}
A rank $r$ matrix $A$ can be written as $A=PI_n(r)Q$, where $I_n(r)={\rm diag}(\underbrace{1, \dots, 1}_r, \underbrace{0, \dots, 0}_{n - r})$ is a partial identity and $P, Q\in {\rm GL}_n(K)$.  Thus it suffices to show that each partial identity $I_n(r)$ can be written as a sum of two matrices of appropriate rank. In both parts, we consider separately the case where $K$ is the $2$-element field.

(i) Suppose first that $K$ has more than $2$ elements. Then for all $0 \leq r \leq k \leq n$ we may choose any $h \in K \setminus \{0, 1\}$ 
and write
\begin{align*}
I_n(r) = \ &{\rm diag}(\underbrace{1 - h, \dots, 1 - h}_r, \underbrace{1, \dots, 1}_{k - r}, 0, \dots, 0)\ + \\ & {\rm diag}(\underbrace{h, \dots, h}_r, \underbrace{-1, \dots, -1}_{k - r}, 0, \dots, 0).
\end{align*}
It is clear that these two diagonal matrices are of rank $k$.

If $K$ is the $2$-element field then for all $0 \leq r \leq k < n$, we have
\[
I_n(r) = \left(I_n(r) + \sum_{i = 1}^k E_{i, i + 1}\right) + \left(\sum_{i = 1}^k E_{i, i + 1}\right),
\]
where $E_{x,y}$ denotes the matrix with $1$ in the $(x,y)$ position and $0$ elsewhere.
It is clear that each of the bracketed expressions in the equation above is a matrix with exactly $k$ non-zero rows, and straightforward to verify that both matrices have rank $k$. In   the case where $k = n$, we have that
\begin{align*}
I_n(r) = &  \left(I_n(r) + I_n(1) + E_{n,1} + \sum_{i = 1}^{n-1} E_{i, i + 1}\right) + \left(I_n(1) + E_{n,1} +\sum_{i = 1}^{n-1} E_{i, i + 1}\right).
\end{align*}
Again, it is easy to see that the two matrices have $n$ non-zero rows and a straightforward calculation reveals that  both have rank $n$. This completes the proof of part (i).

(ii) If $K$ has more than $2$ elements, then for all $3 \leq r \leq n$ and choosing any $h \in K \setminus \{0, 1\}$ we have 
$$I_n(r) = {\rm diag}(\underbrace{1 - h, \dots, 1 - h}_{r-2}, 1, 0, \underbrace{0, \dots, 0}_{n-r}) + {\rm diag}(\underbrace{ h, \dots, h}_{r-2}, 0, 1, \underbrace{0, \dots, 0}_{n-r}).$$
and it is clear that these two matrices have rank $r-1$.

If $K$ is the $2$-element field then for all $3 \leq r \leq n$ we may write
$$I_n(r) =\left(\begin{array}{cc} A_1& 0\\0&B_1 \end{array}\right) + \left(\begin{array}{cc} A_2& 0\\0&B_2 \end{array}\right)$$
where
$$ 
A_1=\begin{pmatrix}
1 & 1 & 0 \\
1 & 1 & 0 \\
0 & 0 & 1
\end{pmatrix}, A_2=\begin{pmatrix}
0 & 1 & 0 \\
1 & 0 & 0 \\
0 & 0 & 0
\end{pmatrix},$$
and $B_1, B_2$ are $(n-3) \times (n-3)$ matrices of rank $r-3$ which sum to $I_{n-3}(r-3)$. Such matrices exist by part (i). Since $A_1$ and $A_2$ have rank $2$, it is easy to see that this writes $I_n(r)$ as a sum of two rank $r-1$ matrices. 
\end{proof}

Once again, the following statement applies to semifields in general, because the result we establish here for fields mirrors the existing
result for anti-negative semifields.
\begin{theorem}
\label{thm_J_bij}
Let $\S$ be a semifield and $T\colon M_n(\S) \rightarrow M_n(\S)$ be a bijective linear map. Then the following are equivalent:
\begin{itemize}
\item[(i)] $T$ preserves $\mathcal{J}$;
\item[(ii)] $T$ preserves $\leq_\mathcal{J}$;
\item[(iii)] $T$ preserves $\H$;
\item[(iv)] $T$ preserves $\leq_\H$;
\item[(v)] there exist $P, Q \in GL_n(K)$ such that either $T(A) = PAQ$ for all $A \in M_n(K)$ or $T(A) = PA^TQ$ for all $A \in M_n(K)$.
\end{itemize}

\end{theorem}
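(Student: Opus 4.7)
The plan is to reduce to the field case via Proposition~\ref{dichotomy} and prove (i)$\Rightarrow$(v), since the other implications are straightforward: (v)$\Rightarrow$(ii),(iv) by Lemma~\ref{transformations}; (ii)$\Rightarrow$(i) and (iv)$\Rightarrow$(iii) are immediate from the definitions; and (iii)$\Leftrightarrow$(v) for bijective maps is Theorem~\ref{bijectiveh}. The anti-negative semifield case of (i)$\Rightarrow$(v) is covered in \cite{GutermanJohnsonKambites}, so assume $\S = K$ is a field.

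Suppose $T$ is a bijective, linear $\J$-preserver. Since the $\J$-classes of $M_n(K)$ are precisely the rank classes $\J_0 = \{0_{n \times n}\}, \J_1, \dots, \J_n$, the bijection $T$ induces a permutation $f$ of $\{0, 1, \ldots, n\}$ satisfying $T(\J_r) = \J_{f(r)}$; clearly $f(0) = 0$. The key claim is $f(1) = 1$: once this holds, $T$ preserves the rank-one matrices, so Theorem~\ref{rank-1-preservers} yields (v). To prove the claim, apply Lemma~\ref{sum_of_2}(i): for $0 \leq r \leq k \leq n$ we have $\J_r \subseteq \J_k + \J_k$, so $\J_{f(r)} \subseteq \J_{f(k)} + \J_{f(k)}$, whence $f(r) \leq 2 f(k)$. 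Choosing $k$ with $f(k) = 1$ forces $f(r) \leq 2$ for $r \leq k$, and injectivity of the resulting map $f\colon \{0, \ldots, k\} \to \{0, 1, 2\}$ gives $k \leq 2$. Applying the same reasoning to $T^{-1}$ (which is also a bijective $\J$-preserver) shows $f(1) \in \{1, 2\}$. If $f(1) = 2$ then necessarily $f(2) = 1$, and for $n \geq 3$ Lemma~\ref{sum_of_2}(ii) applied at $r = 3$ gives $f(3) \leq 2 f(2) = 2$, contradicting $f(3) \in \{3, \ldots, n\}$.

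The only remaining case is $n = 2$ with $f(1) = 2$, $f(2) = 1$ (the case $n = 1$ is trivial). Write $A_i = T(E_{ii}) \in GL_2(K)$. The matrix $\mathrm{diag}(a, b)$ lies in $\J_2$ whenever $ab \neq 0$, so $a A_1 + b A_2 = T(\mathrm{diag}(a, b)) \in \J_1$. The homogeneous polynomial $\det(a A_1 + b A_2)$ of degree $2$ in $a, b$ therefore vanishes on $\{(a, b) : ab \neq 0\}$, while its $a^2$- and $b^2$-coefficients are the non-zero scalars $\det A_1$ and $\det A_2$. For $|K| \geq 4$ the specialisation $\det(A_1 + t A_2) \in K[t]$ would then be a degree-$2$ polynomial with more than two roots, hence identically zero, contradicting $\det A_1 \neq 0$. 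For $K \in \{\mathbb{F}_2, \mathbb{F}_3\}$ the direct count $|\J_1| = (q+1)(q^2-1) \neq q(q-1)(q^2-1) = |\J_2|$ rules out any set-bijection between the two classes. This elimination of the putative swap in the two-dimensional case, for which Lemma~\ref{sum_of_2} alone gives no contradiction, will be the principal technical obstacle.
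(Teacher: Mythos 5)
Your proposal is correct and follows essentially the same route as the paper: reduce to the field case, observe that $T$ permutes the rank classes $\J_0,\dots,\J_n$, use Lemma~\ref{sum_of_2} to force $\sigma(1)\in\{1,2\}$ and to kill the swap $\J_1\leftrightarrow\J_2$ when $n\geq 3$, and finish the $n=2$ case with a quadratic-determinant argument plus counting over tiny fields before invoking Theorem~\ref{rank-1-preservers}. The only (cosmetic) difference is in the $n=2$ endgame, where you apply $\det(aA_1+bA_2)$ directly to $A_i=T(E_{ii})$ and thereby cover $|K|\geq 4$ by the polynomial argument, whereas the paper phrases the same idea as a linear-invariant property of $\Omega=\J_1$ versus $\Omega=\J_2$ requiring $|K|\geq 5$ and so also counts over $\mathbb{F}_4$.
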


\begin{proof}
The equivalence of (iii), (iv) and (v) is Theorem~\ref{bijectiveh} above. The case where $\S$ is an anti-negative is \cite[Corollary~4.4]{GutermanJohnsonKambites}. It will thus suffice to show the
equivalence of (i), (ii) and (v) in the case that $\S = K$ is a field. That (v) implies (ii) is given by Lemma~\ref{transformations}, while
that (ii) implies (i) is immediate from the definitions. It remains only to show that (i) implies (v). We shall do this by proving that (i)
implies $T$ preserves rank $1$ matrices, at which point Theorem \ref{rank-1-preservers} gives (v).

Suppose then that $T$ preserves $\J$. Then for each $r=0, \ldots, n$ we have $T(\J_r) \subseteq \J_{\sigma(r)}$ where $\sigma$ maps $\{0, \ldots, n\}$ to itself. In fact, since $T$ is bijective, we must have that   $\sigma$ is a permutation and $T(\J_r) = \J_{\sigma(r)}$ for each $r$.  Since $T$ is linear we have $\sigma(0)=0$.  We claim that $\sigma(1)=1$. If $n=1$ this is clear, so suppose from now on that $n \geq 2$. 

Suppose for contradiction that $k=\sigma^{-1}(1)>1$. (That is, $T(\J_k) = \J_1$, where $k > 1$.) Let $Y$ be a non-zero matrix of rank strictly less than $k$. By Lemma \ref{sum_of_2}(i) we can write $Y$ as a sum of two matrices of rank $k$ and hence $T(Y)$ as a sum of two matrices of rank $1$. It follows 
that $T(Y)$ has rank at most $2$. Since we have already observed that $T(\J_0)=\J_0$ and $T(\J_k)=\J_1$, we must therefore have that $T(Y)$ has rank $2$. If $k>2$ this immediately gives a contradiction, since multiple $\J$-classes (namely, $\J_1, \ldots, \J_{k-1}$) will be mapped to $\J_2$. Thus we must have $k=2$, or in other words $T(\J_2)=\J_1$, and by the argument just given, $T(\J_1)=\J_2$.
 
For $n \geq 3$, Lemma \ref{sum_of_2}(ii) allows us to express a matrix of rank $3$ as a sum of two matrices of rank $2$. Since
$T(\J_2) = \J_1$ this means that the image of any rank $3$ matrix can be expressed as the sum of two rank $1$ matrices, and hence has rank at most $2$. Thus, $T(\J_3) \subseteq \J_0 \cup \J_1 \cup J_2$. But this gives a contradiction since we have already seen that $T$ permutes $\J$-classes and $T(\J_0)=\J_0$, $T(\J_2)=\J_1$ and $T(\J_1)=\J_2$ and so we cannot map $\J_3$ to any $\J$-class of lower rank.

For $n=2$, we use two more arguments to obtain a contradiction, the first of which applies only in the case where the field contains at least five elements. Let $\Omega$ be a set of $2 \times 2$ matrices, and consider the following question: do there exist two matrices $A$ and $B$ (not necessarily in $\Omega$) such that the set of scalars $\lbrace \lambda \mid A + \lambda B \in \Omega \rbrace$ has cardinality in $\lbrace 1, 2 \rbrace$? Clearly this is a purely linear property of $\Omega$ as a subset of $M_2(K)$, so the answer to this question must be the same for $\Omega = \J_1$ and $\Omega = \J_2 = T(\J_1)$ since $T$ is a linear isomorphism.

For $\Omega = \J_1$ the answer to this question is positive. For example, taking $A = \left(\begin{array}{cc} 1 & 0 \\ 0 & 1 \end{array}\right)$
and
$B = \left(\begin{array}{cc} 0 & 1 \\ 1 & 0 \end{array}\right)$ we have that $A+\lambda B \in \J_1$ if and only if $\lambda = 1$ or $\lambda = -1$ so the given set has cardinality $1$ (if the field has characteristic $2$) or $2$ (otherwise). 

On the other hand, if the field has at least $5$ elements then for $\Omega = \J_2 = GL_2(K)$ the answer is negative. Indeed, the non-singularity condition
$\det(A+\lambda B) \neq 0$ is quadratic in $\lambda$, so either the determinant is identically zero (in which case the cardinality of the given set is $0$) or else it has at most two roots (in which case at least $3 = 5-2$ values of $\lambda$ do not satisfy it, so the cardinality of the given set is at least $3$).

Thus, we cannot have $T(\J_1) = \J_2$ where the field has $5$ or more elements. There remain only the three finite fields with strictly fewer than $5$ elements and in each case the fact that $T(\J_1) \neq \J_2$ can be seen by simple counting arguments. Over the $2$-element or $4$-element fields the number of non-zero $2 \times 2$ matrices is odd, so $\J_1$ and $\J_2$ (whose union gives the set of all such non-zero matrices) cannot have the same cardinality. Over the $3$-element field it is well known and easy to calculate that $|\J_1| = 32$ and $|\J_2| = 48$. Thus  in each of these remaining small cases there cannot be any bijective map  taking $\J_1$ to $\J_2$, even without the linearity condition.
\end{proof}

\begin{theorem}
\label{thm_J_nonbij}
Let $K$ be any field. Then a linear map $T\colon M_n(K) \to M_n(K)$ preserving $\J$ is either the zero map or a bijection.
\end{theorem}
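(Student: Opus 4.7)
The plan is to exploit the fact, recalled at the start of Section~5, that $\J$-equivalence on $M_n(K)$ coincides with having equal rank. Since $T$ preserves $\J$, the rank of $T(A)$ depends only on the rank of $A$, so there is a well-defined function $\sigma\colon\{0,1,\ldots,n\}\to\{0,1,\ldots,n\}$ with $T(\J_r)\subseteq \J_{\sigma(r)}$ for every $r$, and clearly $\sigma(0)=0$. The entire result will follow from the single assertion:
\[
\text{if } \sigma(k)=0 \text{ for some } k\ge 1, \text{ then } T=0.
\]
Indeed, because $M_n(K)$ is finite dimensional, $T$ is bijective iff it is injective iff $\ker T=\{0_{n\times n}\}$; and if $T$ is non-zero while some non-zero $A\in\ker T$ exists, then $r:=\rk A\ge 1$ and $\sigma(r)=0$, contradicting the displayed assertion.

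To prove the displayed assertion I would use Lemma~\ref{sum_of_2}(i), which guarantees that for every $r$ with $0\le r\le k$, each rank $r$ matrix can be written as a sum of two rank $k$ matrices. If $\sigma(k)=0$ then $T$ annihilates every rank $k$ matrix, hence by linearity it annihilates every sum of two such matrices; thus $T$ vanishes on $\J_r$ for each $r\le k$. In particular $T$ vanishes on every rank $1$ matrix. Since the standard matrix units $E_{ij}$ all have rank $1$ and span $M_n(K)$, this forces $T=0$.

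Putting the two steps together: assume $T$ preserves $\J$ and is non-zero. By the displayed assertion, $\sigma(r)\ge 1$ for every $r\ge 1$, so no non-zero matrix can lie in $\ker T$; hence $T$ is injective and therefore bijective. The argument needs no case analysis on the characteristic or size of the field because Lemma~\ref{sum_of_2}(i) already handles the $|K|=2$ case separately. There is no real obstacle here: the only subtle point is checking that the spanning argument works uniformly, i.e.\ that rank $1$ matrices span $M_n(K)$ over any field, which is immediate from the presence of the matrix units $E_{ij}$.
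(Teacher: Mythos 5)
Your proof is correct and follows essentially the same route as the paper: both arguments rest on Lemma~\ref{sum_of_2}(i) to show that if $T$ kills all matrices of some rank $k\geq 1$ then it kills all rank~$1$ matrices (hence everything), the only difference being that you phrase this as a contrapositive while the paper argues directly from a rank~$1$ matrix $A$ with $T(A)\neq 0_{n\times n}$. No gaps.
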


\begin{proof}
If $T(\J_1) = \J_0$, then $T$ is the zero map, since each matrix is a sum of rank one matrices. Suppose then that there exists $A \in \J_1$ with $T(A) \neq 0_{n \times n}$. By part (i) of Lemma \ref{sum_of_2}, for each $1 \leq r \leq n$ there exist matrices $B_r$ and $C_r$ of rank $r$ such that $A=B_r+C_r$. Applying $T$ we find that
$$0_{n \times n} \ \neq \ T(A) \ = \ T(B_r)+T(C_r)$$
and hence for each $1 \leq r \leq n$ we see that $T(\J_r) \neq \J_0$. Since $T$ preserves $\J$ we deduce that the kernel of
$T$ is trivial, so by the rank-nullity theorem $T$ is bijective.  
\end{proof}

\begin{corollary}
Let $K$ be any field. Then a linear map $T\colon M_n(K) \to M_n(K)$ preserving $\leq_\J$ is either the zero map or a bijection.
\end{corollary}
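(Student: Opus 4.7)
The plan is very short: reduce the statement for $\leq_\J$ to the corresponding statement for $\J$, which was just proved as Theorem~\ref{thm_J_nonbij}. The key observation is that preservation of a pre-order automatically implies preservation of the associated equivalence relation. Explicitly, recall from the definition of Green's relations that $A \J B$ if and only if $A \leq_\J B$ and $B \leq_\J A$. So if $T$ preserves $\leq_\J$ and $A \J B$, then from $A \leq_\J B$ we deduce $T(A) \leq_\J T(B)$, and from $B \leq_\J A$ we deduce $T(B) \leq_\J T(A)$; combining these gives $T(A) \J T(B)$. Thus $T$ preserves $\J$.

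Having established this, Theorem~\ref{thm_J_nonbij} applies directly to $T$ and tells us that $T$ is either the zero map or a bijection, which is exactly what we want. There is essentially no obstacle here, since the whole content sits in the previously proved result about $\J$-preservers; the $\leq_\J$ case is a formal consequence of the fact that $\J$ is the symmetric part of $\leq_\J$.
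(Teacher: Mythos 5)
Your reduction is correct and is exactly the intended argument: since $\J$ is the symmetric part of $\leq_\J$, any $\leq_\J$-preserver preserves $\J$, and Theorem~\ref{thm_J_nonbij} then applies. The paper leaves the corollary without an explicit proof precisely because this is immediate, so your proposal matches the paper's (implicit) approach.
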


\end{document}